\documentclass[11pt]{article}
\usepackage{amsmath}
\usepackage{mathtools}
\usepackage{amsthm}
\usepackage{amssymb}
\usepackage{amsfonts}

\numberwithin{equation}{section}
\usepackage{indentfirst}
\usepackage{latexsym}
\usepackage{graphicx}
\usepackage{flafter}
\usepackage{caption}
\usepackage{textcomp}
\usepackage{supertabular}
\usepackage{longtable, booktabs}
\usepackage{array}
\usepackage{hhline}
\usepackage{bigstrut,bigdelim}
\usepackage{rotating}
\usepackage{multicol}
\usepackage{multirow}
\usepackage{makeidx}
\usepackage{fancyhdr}
\usepackage{threeparttable}
\usepackage{mathrsfs}
\usepackage[all]{xy}
\usepackage{xcolor}
\usepackage[numbers,sort&compress]{natbib}
\usepackage{hyperref}
\theoremstyle{definition}
\newif\ifger
\gerfalse
\newtheorem{definition}{Definition}[section]
\newtheorem{theorem}{Theorem}[section]
\newtheorem{lemma}[definition]{Lemma}

\newtheorem{hypothesis}{Hypothesis}
\newtheorem{conjecture}{Conjecture}[section]
\newtheorem{proposition}[definition]{Proposition}

\newtheorem*{availability of data and material}{Availability of Data and Material}
\newtheorem*{conflict of interest}{Conflict of interest}

\newcommand{\res}[2]{\left\langle#1\right\rangle_{#2}}

\begin{document}
\baselineskip=19pt

\title{Permutation binomials of the form \\[2mm]
$X^r(X^{q-1}+a)$ over finite fields}

\author{
Junna Ni$^a$, Xuan Pang$^{a,*}$, Jianhua Yu$^a$, Pingzhi Yuan$^a$\\
\small\itshape
$^a$School of Mathematical Sciences, South China Normal University\\
\small
E-mail addresses:
\texttt{nijunna@126.com} (J. Ni),
\texttt{pangxuan202503@163.com} (X. Pang),\\
\small
\texttt{yujianhuascnu@126.com } (J. Yu),
\texttt{yuanpz@scnu.edu.cn} (P. Yuan)\\
\small
$^*$Corresponding author
}

\date{}
\maketitle

\begin{abstract}
This paper is devoted to studying permutation binomial
$F_{r,a}(X)=X^r(X^{q-1}+a)\in\mathbb F_{q^e}[X]$ with $a\in\mathbb F_{q^e}^*$. We present a complete characterization for $F_{r,a}$ to be a permutation of $\mathbb{F}_{q^e}$. This yields a complete proof of the conjecture proposed by Masuda--Rubio--Santiago \cite{masuda2022permutation}. More precisely, we show that \(F_{r,a}\) permutes \(\mathbb F_{q^e}\) if and only if $(-a)^{(q^e-1)/(q-1)}\ne1, \gcd(r,q-1)=1$,
and there exists \(1\le h<e\) with \(\gcd(h,e)=1\) such that $r(q^h-1)\equiv q-1\pmod{q^e-1}.$

\medskip
\noindent{\bf MSC(2010):} 11C08; 12E10.

\medskip
\noindent{\bf Keywords:} Finite field; permutation binomial; Galois ring; power sum.

\end{abstract}

\section{Introduction}

Let \(\mathbb F_q\) be the finite field of \(q\) elements, where \(q\) is a prime power. A polynomial \(f(X)\in\mathbb F_q[X]\) is called a permutation polynomial if it induces a permutation of \(\mathbb F_q\).
Permutation polynomials are fundamental objects in finite field theory and have been extensively studied because of both their algebraic interest and their applications in areas such as coding theory \cite{Ding, Ding-Zhou,Laigle-Chapuy},
cryptography \cite{Rivest-Shamir-Adelman, Schwenk-Huber, Singh2009, Singh2020} and combinatorial designs \cite{Ding-Yuan}. In particular, determining the
permutation behavior of polynomials with few terms remains an
interesting and challenging problem.

Among sparse permutation polynomials, permutation binomials have been studied by means of
character sums~\cite{Masuda2009}, the multivariate method~\cite{Dobbertin1999almost,WangYP2018}, the fractional approach~\cite{AGW2011,Wan1991,Wang2007,zieve2007}, and related approaches \cite{Ayad2015, Hou2016,Tu2021,Hou2013,Hou_st2015, Wan1994,Hou2015survey,PB2024survey}. In fact, after a suitable power substitution, a general binomial \(X^m+aX^n\) can be reduced to the form $X^r\bigl(X^{(q-1)/d}+a\bigr)$, where $d\mid q-1$.
 Nevertheless, a general characterization remains difficult. Even for the structured family $X^r\bigl(a+X^{t(q-1)}\bigr)$, necessary and sufficient conditions are known only in special cases. For more information related to permutation binomials, the reader is referred to the surveys~\cite{Hou2015survey,PB2024survey} and the references therein.

In this paper, we study permutation binomials over \(\mathbb F_{q^e}\) of the form $F_{r,a}(X)=X^r(X^{q-1}+a)$, where \(e\ge2\), \(r\ge1\), and
\(a\in\mathbb F_{q^e}^*\). Masuda, Rubio and Santiago~\cite{masuda2022permutation} investigated
this family and obtained a complete classification for
\(e\in\{2,3,4\}\) over arbitrary prime powers \(q\), and for
\(e\in\{5,6\}\) when \(q\) is an odd prime. For general \(e\), they also characterized such permutation binomials that can be expressed as the composition of a monomial with a linearized binomial. Moreover, based on a computer search for \(q^e<10^8\), they further proposed the following conjecture.

\begin{conjecture}\cite[Conjecture 1.5]{masuda2022permutation}\label{conj}
Let $F_{r,a}(x) = X^r(X^{q-1}+a) \in \mathbb{F}_{q^e}^*[X]$ with $e \ge 2$, and let $L =(q^e-1)/(q-1)$.  Then $F_{r,a}(X)$ permutes $\mathbb{F}_{q^e}$ if and only if $F_{r,a}(X)\equiv \left(X^{q^h} + aX\right)\circ X^r \pmod{X^{q^e} - X}$, where $(-a)^L \neq 1$ and $\gcd(r,q-1)=1$.
\end{conjecture}

The purpose of the present paper is to prove this conjecture.
Our main theorem gives the following complete characterization.

\begin{theorem}\label{thm:main}
Let $F_{r,a}(X)=X^r(X^{q-1}+a)\in\mathbb{F}_{q^e}^*[x]$ with $r\ge1$ and $e\geq 2$.  Then $F_{r,a}(X)$  permutes $\mathbb{F}_{q^e}$ if and only if the following conditions hold: 

(i) \((-a)^{(q^e-1)/(q-1)}\ne1\);

(ii) \(\gcd(r,q-1)=1\);

(iii) there exists \(1\le h<e\) with \(\gcd(h,e)=1\) such that $r(q^h-1)\equiv q-1\pmod{q^e-1}.$ Equivalently,  \(rL_h\equiv1\pmod L\), where $L=(q^e-1)/(q-1)$ and $L_h=(q^h-1)/(q-1)$.
\end{theorem}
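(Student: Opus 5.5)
We reduce to the standard criterion for polynomials of the form $X^r h(X^{s})$ with $s=q-1$. By the criterion of Park--Lee/Zieve, $F_{r,a}$ permutes $\mathbb F_{q^e}$ if and only if $\gcd(r,q-1)=1$ and
\[
g(X)=X^r(X+a)^{q-1}
\]
permutes the subgroup $\mu_L$ of $(q-1)$-th powers, where $L=(q^e-1)/(q-1)$. If $-a\in\mu_L$, i.e. $(-a)^L=1$, then $g(-a)=0\notin\mu_L$, so $F_{r,a}$ is not a permutation. This gives the necessity of (i) and (ii).

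Sufficiency is the easy direction. Assume (i)--(iii) and write $y=x^r$. Since $y^{q-1}$ depends only on $x^{q-1}$ and $r(q^h-1)\equiv q-1$, we get $x^{q-1}=y^{q^h-1}$ on $\mathbb F_{q^e}^*$. Hence $F_{r,a}(x)=y^{q^h}+ay$ modulo $X^{q^e}-X$, and $x\mapsto x^r$ is a bijection because $\gcd(r,q^e-1)=1$. This coprimality follows from $\gcd(r,q-1)=1$ together with $rL_h\equiv 1\pmod L$. The linearized polynomial $y^{q^h}+ay$ has no nonzero root: such a root would satisfy $y^{q^h-1}=-a$, and since $\gcd(q^h-1,q^e-1)=q-1$ (using $\gcd(h,e)=1$), this would force $(-a)^L=1$. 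So $y^{q^h}+ay$ permutes $\mathbb F_{q^e}$.

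Necessity of (iii) is the heart of the proof. I would use Hermite's criterion on $g$ restricted to $\mu_L$ in the form of power sums: for each $1\le t<L$ we need
\[
\sum_{z\in\mu_L} z^{rt}(z+a)^{t(q-1)}=0.
\]
Expanding $(z+a)^{t(q-1)}$ and summing over $\mu_L$ picks out the binomial coefficients $\binom{t(q-1)}{j}$ with $j\equiv -rt\pmod L$, weighted by $a^{t(q-1)-j}$. The resulting equations in $a$ must vanish. To avoid the vanishing mod $p$ that kills the information, the keywords suggest lifting to the Galois ring $GR(p^2,\cdot)$ or to characteristic $0$ via Teichmüller lifts. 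There one compares the power sums $\sum_{x}F(x)^t$ modulo $p^2$ and obtains congruence conditions on $r$ from the $p$-adic valuations of the binomial coefficients, using Lucas/Kummer. I would choose $t$ cleverly, namely small multiples with base-$q$ digit structure, for example $t=q^k$-type exponents or $t$ with $t(q-1)$ having base-$q$ digits concentrated in a few positions. This should show that the only $j$ with nonvanishing contribution correspond to $rL_h\equiv 1\pmod L$ for some $h$. Then $\gcd(h,e)=1$ follows from the fact that $g$ must be injective on $\mu_L$. If instead $d=\gcd(h,e)>1$, the map $y\mapsto y^{q^h}+ay$ is $\mathbb F_{q^d}$-linear with a kernel issue, or equivalently the equation $y^{q^h-1}=-a$ becomes solvable for some $a$. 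More precisely, I would show that a collision appears among the fibers of $x\mapsto x^{q-1}$.

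The main obstacle is this necessity step: extracting from Hermite/power-sum conditions that $r$ must be congruent to an inverse of some $L_h$ modulo $L$, uniformly in $e$ and $q$. This requires a careful choice of test exponents $t$ and an analysis, via Kummer's theorem in the Galois ring, of exactly which terms survive. I expect to need a lemma that a carefully chosen power sum equals a nonzero multiple of a single monomial $a^{m}$ unless $r\equiv L_h^{-1}$, handling $p\mid$ coefficient issues by working modulo $p^2$.
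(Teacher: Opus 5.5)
The sufficiency argument is the same as the paper's. Deriving (i) and (ii) from the $X^rh(X^{q-1})$ criterion is a harmless alternative to the paper's direct collision arguments. The genuine gap is the necessity of (iii), which is the entire content of the Masuda--Rubio--Santiago conjecture, and which your proposal leaves as a hope. No test exponents are fixed and no binomial valuations are computed. The lemma you \emph{expect to need} (some power sum is a single nonzero monomial in $a$ unless $rL_h\equiv1\pmod L$) essentially restates what must be proved.

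The power sums do not see the congruence $rL_h\equiv1\pmod L$ directly. What they control is the Euclidean division $\bar rL_h=n_hL+\rho_h$ with $\bar r=\langle r\rangle_L$, and the decisive test exponent must be built from this data, i.e.\ from $r$ itself.

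Working modulo $p^2$ is also too coarse for the exponents that do the work. The lifted Hermite identity for an exponent $S$ holds in $\mathcal A_{1+v_p(S)}$ (Lemma~\ref{lem:lifted_power_sum}). It is not available in characteristic $0$, because the lift of $F_{r,a}$ need not send Teichm\"uller elements to Teichm\"uller elements. The paper's key exponent is $S=p^w(q^{e-b}-1)q^{b-1}$, where $b$ is the largest \emph{bad} index and $w=v_p(\nu_b)$; it has $v_p(S)=w+m(b-1)$. Its unique surviving coefficient has valuation $w+m(b-1)-v_p(R_b)$. This is at least $2$ whenever, for example, $w\ge1$, or $p\nmid R_b$ and $m(b-1)\ge2$. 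In those cases the identity reads $0=0$ modulo $p^2$.

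What is missing is a three-stage argument.
\begin{enumerate}
\item The exponents $S=q^h-1$ satisfy $\binom{S}{J}\equiv(-1)^J\pmod p$, so no admissible index may exist. This gives $0<\rho_h\le L-q^h$ and $\bar r\equiv1\pmod q$ (Lemma~\ref{lem2.3}).
\item One must show $q\mid n_h$ for all $1\le h<e$. Otherwise, take the largest bad index $b$. Sign-change bookkeeping on differences of the remainders $\rho$ at the two ends of $\{1,\dots,e-1\}$ shows that $e-b$ is good and that $\nu_b=\delta_b$ (Propositions~\ref{prop:maxB1} and~\ref{prop:maxB2}). Kummer's theorem then shows that among the admissible indices $p^wJ_b+uL$ with $|u|<p^w$, only $u=0$ survives modulo $p^{1+w+m(b-1)}$. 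This contradicts the lifted identity (Proposition~\ref{prop:exclusion}).
\item A purely combinatorial step converts $q\mid n_h$ into the congruence (Proposition~\ref{prop:exponent-condition}). The base-$q$ digits of $\bar r$ lie in $\{0,1\}$ and its $e$ cyclic shifts are distinct. Ordering the shifts increasingly, one further left shift raises the rank by the digit sum $\omega$ modulo $e$. Hence $\gcd(\omega,e)=1$ and $\bar r\equiv\sum_{t<\omega}q^{th}\pmod{q^e-1}$ with $\omega h\equiv1\pmod e$.
\end{enumerate}
None of this appears in your sketch.

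Finally, your separate argument for $\gcd(h,e)=1$ is muddled and also unnecessary. The element $a$ is fixed, so solvability of $y^{q^h-1}=-a$ \emph{for some} $a$ proves nothing. And since $\gcd(L_h,L)=L_{\gcd(h,e)}$, the congruence $rL_h\equiv1\pmod L$ by itself forces $\gcd(h,e)=1$.
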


The first two necessary conditions are elementary and can be addressed immediately.

\begin{proposition}\label{conditions_for_a,q}
If $F_{r,a}(X)$ permutes $\mathbb F_{q^e}$, then $(-a)^{L}\neq 1$ and $\gcd(r,q-1)=1$. Moreover, for \(q=2\), there is no such polynomial that permutes $\mathbb F_{q^e}$.
\end{proposition}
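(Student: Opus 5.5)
The first claim, $(-a)^L\neq 1$, I would prove by exhibiting a nonzero root of $X^{q-1}+a$ whenever $(-a)^L=1$. The multiplicative group $\mathbb F_{q^e}^*$ is cyclic of order $q^e-1=(q-1)L$. The image of $x\mapsto x^{q-1}$ is exactly the subgroup of order $L$, namely $\{y: y^L=1\}$. So if $(-a)^L=1$, there is $x_0\in\mathbb F_{q^e}^*$ with $x_0^{q-1}=-a$. Then $F_{r,a}(x_0)=x_0^r(x_0^{q-1}+a)=0=F_{r,a}(0)$, since $r\ge1$. Thus $F_{r,a}$ is not injective.

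For $\gcd(r,q-1)=1$, I would use the $\mathbb F_q^*$-scaling behaviour. For $c\in\mathbb F_q^*$ we have $c^{q-1}=1$, so
\[
F_{r,a}(cx)=c^r x^r\bigl(c^{q-1}x^{q-1}+a\bigr)=c^rF_{r,a}(x).
\]
Suppose $d=\gcd(r,q-1)>1$. Then there is $c\in\mathbb F_q^*$ with $c\neq1$ and $c^r=1$ (take $c$ of order $d$ in the cyclic group $\mathbb F_q^*$). Pick any $x\neq0$, so $cx\neq x$. We get $F_{r,a}(cx)=F_{r,a}(x)$, contradicting injectivity.

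For $q=2$, I would combine the two conditions. Here $q-1=1$, so $L=2^e-1$ and every $a\in\mathbb F_{2^e}^*$ satisfies $a^{2^e-1}=1$. Since $-a=a$ in characteristic $2$, $(-a)^L=1$ always, and the first part shows $F_{r,a}$ is never a permutation. (Directly: $F_{r,a}(x)=x^r(x+a)$ vanishes at both $0$ and $a$.)

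None of these steps is a genuine obstacle. The only point needing care is the description of the image of the $(q-1)$-th power map as the kernel of $y\mapsto y^L$, and this follows from cyclicity of $\mathbb F_{q^e}^*$.
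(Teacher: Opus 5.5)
Your proof is correct and follows essentially the same route as the paper. A nonzero root of $X^{q-1}+a$, which exists exactly when $(-a)^L=1$ by cyclicity of $\mathbb F_{q^e}^*$, gives a second zero besides $0$; an element $c\in\mathbb F_q^*\setminus\{1\}$ with $c^r=1$ gives $F_{r,a}(cx)=F_{r,a}(x)$; and for $q=2$ both $0$ and $a$ are zeros. Your remark that choosing $x\neq0$ guarantees $cx\neq x$ makes explicit a point the paper leaves implicit, and so does your observation that the $q=2$ case is a special instance of the first condition.
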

\begin{proof}
Suppose that $F_{r,a}$ permutes $\mathbb{F}_{q^e}$. Since $\mathbb{F}_{q^e}^*$ is cyclic of order $q^e-1$, the equation $x^{q-1}=-a$ has a nonzero solution in $\mathbb{F}_{q^e}^*$ if and only if $(-a)^{(q^e-1)/(q-1)}=1$. Such a nonzero root would yield two distinct zeros for $F_{r,a}$, contradicting the permutation property. Hence $(-a)^{(q^e-1)/(q-1)}\neq1$.

If $\gcd(r,q-1)>1$, there exists $\lambda\in\mathbb{F}_q^*$ with $\lambda\ne1$ and $\lambda^r=1$. Since $\lambda^{q-1}=1$, we have $F_{r,a}(\lambda x)=\lambda^rF_{r,a}(x)=F_{r,a}(x)$, again contradicting injectivity. When $q=2$, the distinct elements $0$ and $a$ are both zeros of $F_{r,a}$.
\end{proof}

The main problem is therefore the exponent condition.  For \(1\le h<e\), the proof will write \(\bar rL_h=n_hL+\rho_h\) and show that \(q\mid n_h\) for every \(h\), where $L=(q^e-1)/(q-1)$ and \(\bar r=\res{r}{L}\) denotes the least nonnegative residue of
\(r\) modulo \(L\).  The quotient divisibility is not the final statement of the conjecture; it is an intermediate arithmetic form from which the exponent congruence will be reconstructed in the last section.

The remainder of this paper is organized as follows. In Section~2,
we recall the basic facts on Galois rings and \(p\)-adic valuations
needed in the proof. In Section~3, we lift the Hermite power-sum
identity to Galois rings and derive the initial restrictions on the
exponent. Section~4 proves the divisibility of the quotient parameters
\(n_h\). In Section~5, we use a cyclic-shift argument to obtain the
exponent condition. Finally, Section~6 completes the proof of
Theorem~\ref{thm:main}.


\section{Preliminaries}
In this section, we collect the basic facts on Galois rings and $p$-adic valuations of binomial coefficients that will be used in the sequel (see \cite[Section 6]{bini2002finite} and \cite{Kummer1852}).

Throughout this paper, let $q=p^m$ be a prime power and $e\ge 2$. For $0\le h\le e$, define $L_h=(q^h-1)/(q-1)=1+q+\cdots+q^{h-1}$ with $L_0=0$, and let $L=L_e$. Then $(q-1)L_h=q^h-1$ and $L_h=qL_{h-1}+1$ for all $1\le h\le e$, and $L=q^hL_{e-h}+L_h$ for $0\le h\le e$. Given an integer~$z$ and positive integer $M$, let $\langle z\rangle_M$ denote the least nonnegative residue of $z$ modulo $M$. For a nonzero integer $z$, let $v_p(z)=v_p(|z|)$, the largest nonnegative integer $j$ such that $p^j\mid z$.

\subsection{Galois rings and Teichm\"uller representatives}

Let $p$ be a prime and $t,n$ be positive integers. A Galois ring $R=GR(p^t,n)$ is a finite local ring of
characteristic $p^t$ and order $p^{tn}$, with unique maximal ideal $pR$ and residue field  $R/pR\cong\mathbb F_{p^n}$.

Equivalently, if $\phi(U)\in\mathbb Z[U]$ is a monic polynomial of degree $n$ whose reduction modulo~$p$ is irreducible, then $R\cong\mathbb{Z}_{p^t}[U]/(\phi(U))$, where \(\mathbb Z_{p^t}=\mathbb Z/p^t\mathbb Z\).
In particular, the natural reduction map
$\pi:R\longrightarrow R/pR\cong\mathbb F_{p^n}$
is surjective, and $z\in R$ is a unit if and only if  $\pi(z)$ is nonzero.  Moreover, $\mathbb{Z}_{p^t}$ embeds naturally into $R$, and thus an integer vanishes in \(R\) precisely when it is divisible by~\(p^t\).

It is well known that each $z\in R$ has a unique $p$-adic expansion $z=\sum_{i=0}^{t-1}p^iz_{i}$, where each $z_i$ belongs to the set $\mathcal{T}_n:=\{0,1,\xi,\dots,\xi^{p^n-2}\}$ called the {\it Teichm\"uller set} of $R$. Moreover, $\mathcal{T}_n^*=\langle\xi\rangle$ is a cyclic subgroup of $R^*$ of order $p^n-1$. Thus $\mathcal{T}_n^*\cong\mathbb{F}_{p^n}^*$.


\subsection{A $p$-adic valuation formula for binomial coefficients}
By Kummer's theorem, for a prime $p$ and integers $0\le M\le N$, the $p$-adic valuation $v_p\binom{N}{M}$ equals the number of
carries occurring when $M$ and $N-M$ are added in base $p$. The following consequence gives the particular valuation formula
needed later.

\begin{lemma}\label{lem:kummer_special}
Let $\alpha,\beta$ be positive integers, and suppose that $N=p^\alpha(p^\beta-1)$, $M=p^\alpha Q+E$,
where $Q$ is an integer, $0\leq E<p^\alpha$, and $0\le M\le N$. Then
\begin{equation*}
v_p\binom{N}{M}=
\begin{cases}
0, & E=0,\\[1mm]
\alpha+v_p(Q+1)-v_p(E), & E>0.
\end{cases}
\end{equation*}
\end{lemma}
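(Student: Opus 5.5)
The plan is to avoid tracking carries position by position and use Legendre's form of Kummer's theorem instead. Write $s_p(z)$ for the base-$p$ digit sum of $z\ge 0$. Then
$$v_p\binom{N}{M}=\frac{s_p(M)+s_p(N-M)-s_p(N)}{p-1},$$
which equals the carry count described just before the lemma. First, $s_p(N)=\beta(p-1)$: the base-$p$ expansion of $N=p^\alpha(p^\beta-1)$ has $\alpha$ zero digits followed by $\beta$ digits equal to $p-1$. Second, $Q\ge 0$: otherwise $M=p^\alpha Q+E<0$ because $E<p^\alpha$. Hence $s_p(M)=s_p(Q)+s_p(E)$, since $E$ fills the lowest $\alpha$ digits and $Q$ the higher ones.

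\emph{Case $E=0$.} From $M\le N$ we get $0\le Q\le p^\beta-1$, and $N-M=p^\alpha(p^\beta-1-Q)$. Every digit of $p^\beta-1$ equals $p-1$, so subtracting $Q$ from it is digitwise with no borrows. Therefore $s_p(p^\beta-1-Q)=\beta(p-1)-s_p(Q)$, the three digit sums cancel, and the valuation is $0$.

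\emph{Case $E>0$.} Now $M\le N$ forces $Q\le p^\beta-2$. Write
$$N-M=p^\alpha(p^\beta-2-Q)+(p^\alpha-E), \qquad 0<p^\alpha-E<p^\alpha,$$
so that $s_p(N-M)=s_p(p^\beta-2-Q)+s_p(p^\alpha-E)$. Both pieces reduce to the elementary identity
$$s_p(z+1)=s_p(z)+1-(p-1)v_p(z+1) \qquad (z\ge 0),$$
which holds because adding $1$ turns the $v_p(z+1)$ trailing digits equal to $p-1$ into zeros and raises the next digit by $1$. The two evaluations go as follows.

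For the low part, set $v=v_p(E)$. The no-borrow subtraction gives $s_p(p^\alpha-1-E)=\alpha(p-1)-s_p(E)$. Moreover $p^\alpha-1-E$ ends in exactly $v$ digits equal to $p-1$, so $v_p(p^\alpha-E)=v$. The identity with $z=p^\alpha-1-E$ then gives
$$s_p(p^\alpha-E)=(\alpha-v)(p-1)+1-s_p(E).$$

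For the high part, $s_p(p^\beta-2-Q)=s_p\bigl(p^\beta-1-(Q+1)\bigr)=\beta(p-1)-s_p(Q+1)$, using $0<Q+1\le p^\beta-1$. The identity with $z=Q$ gives $s_p(Q+1)=s_p(Q)+1-(p-1)v_p(Q+1)$.

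Substituting everything into Legendre's formula, the terms $s_p(Q)$, $s_p(E)$, $\beta(p-1)$ and the two $\pm1$'s cancel. What remains is $(p-1)\bigl(\alpha+v_p(Q+1)-v_p(E)\bigr)$, which after dividing by $p-1$ is the claimed formula.

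The only delicate step is the bookkeeping in the case $E>0$. One must check that the split of $N-M$ into $p^\alpha(p^\beta-2-Q)$ and $p^\alpha-E$ involves no interaction between the two blocks of digits; this holds because $0<p^\alpha-E<p^\alpha$. One must also pin down the trailing digits of $p^\alpha-1-E$ exactly. Everything else is routine.
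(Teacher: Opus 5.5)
Your proof is correct, and it takes a different route from the paper.

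\textbf{The paper's route.} It never computes digit sums.
\begin{itemize}
\item It applies the absorption identity $\binom{N}{M}=\frac{N}{M}\binom{N-1}{M-1}$. Since $v_p(N)=\alpha$ and $v_p(M)=v_p(E)$, this immediately produces the term $\alpha-v_p(E)$.
\item The lowest $\alpha$ base-$p$ digits of $N-1$ all equal $p-1$, so Kummer's theorem reduces $v_p\binom{N-1}{M-1}$ to $v_p\binom{p^\beta-2}{Q}$.
\item A second absorption, against $\binom{p^\beta-1}{Q+1}$ (which is prime to $p$), gives $v_p(Q+1)$.
\end{itemize}

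\textbf{Your route.} You evaluate Legendre's formula $\bigl(s_p(M)+s_p(N-M)-s_p(N)\bigr)/(p-1)$ directly. You split $N-M=p^\alpha(p^\beta-2-Q)+(p^\alpha-E)$ into two non-interacting digit blocks. You then apply the increment identity $s_p(z+1)=s_p(z)+1-(p-1)v_p(z+1)$ at $z=Q$ and at $z=p^\alpha-1-E$. These two applications are the source of the terms $v_p(Q+1)$ and $-v_p(E)$, loosely mirroring the paper's two absorption steps.

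\textbf{Comparison.} The paper's argument is shorter and isolates each valuation term with a one-line identity. Yours is fully mechanical and self-contained: besides Legendre's formula, it needs only the digit-complement fact $s_p(p^k-1-x)=k(p-1)-s_p(x)$ for $0\le x<p^k$. The cost is the extra bookkeeping you flag yourself: the separation of the two digit blocks, and the exact count of trailing $(p-1)$-digits of $p^\alpha-1-E$. You handle both correctly.
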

\begin{proof}
If $E=0$, then $M=p^\alpha Q$. By Kummer's theorem,
$v_p\binom{N}{M}
=v_p\binom{p^\beta-1}{Q}=0,$
since every base-$p$ digit of $p^\beta-1$ is $p-1$. Now suppose $E>0$. Since $0<E<p^\alpha$,
$v_p(M)=v_p(E)$. Using
$\binom{N}{M}=
\frac{N}{M}\binom{N-1}{M-1},$
we obtain
\begin{equation}\label{eq:kummer_special}
v_p\binom{N}{M}=
\alpha-v_p(E)+v_p\binom{N-1}{M-1}.\end{equation}
Now write
$N-1=(p^\beta-2)p^\alpha+(p^\alpha-1)$,
and $M-1=Qp^\alpha+(E-1)$.
The lowest $\alpha$ base-$p$ digits contribute no carries, so Kummer's theorem gives
$v_p\binom{N-1}{M-1}=v_p\binom{p^\beta-2}{Q}$.
Since $\binom{p^\beta-1}{Q+1}=
\frac{p^\beta-1}{Q+1}
\binom{p^\beta-2}{Q}$
and $v_p\binom{p^\beta-1}{Q+1}=0$,
it follows that $v_p\binom{p^\beta-2}{Q}=v_p(Q+1)$. Substituting into Equation \eqref{eq:kummer_special} yields the desired conclusion.
\end{proof}

\section{Lifted Hermite power sums and preliminary restrictions}
Let $q=p^m$ and $n=me$. Recall that if $f(X)\in\mathbb F_{q^e}[X]$ induces a permutation of $\mathbb F_{q^e}$, then, for every $1\le S<q^e-1$,
\begin{equation}\label{Hermit}
\sum_{X\in\mathbb F_{q^e}}f(X)^S
=\sum_{X\in\mathbb F_{q^e}}X^S=0.
\end{equation}
This is the power-sum form of the Hermite criterion. However,
when direct application in finite fields, it lacks sufficient information for our problem. Indeed, in the expansion of $\left(X^r(X^{q-1}+a)\right)^S$, all integer coefficients divisible by $p$ vanish in $\mathbb{F}_{q^e}$, erasing information about higher $p$-adic divisibility. To preserve this information, we lift the power-sum identities to a suitable Galois ring.


Choose a monic irreducible
polynomial $\overline{\varphi}(U)\in\mathbb F_p[U]$ of degree~$n$ such that $\mathbb F_{q^e}\cong
\mathbb F_p[U]/(\overline{\varphi}(U))$, and let $\varphi(U)\in\mathbb Z[U]$ be a monic lift of
$\overline{\varphi}(U)$. For $t\ge1$, define $\mathcal A_t=\mathbb Z_{p^t}[U]/(\varphi(U))$.
Then $\mathcal A_t\cong \operatorname{GR}(p^t,n)$, with residue field $\mathbb F_{q^e}$. Denote $\mathcal{T}_t$ as the Teichm\"uller set of $\mathcal A_t$.

By Section~2.1, an integer $z\in\mathcal{A}_t$ is zero if and only if $p^t\mid z$. Let $\pi_t:\mathcal A_t\longrightarrow\mathbb F_{q^e}$ be the natural reduction map. Its restriction $\pi_t|_{\mathcal T_t}:\mathcal T_t\longrightarrow\mathbb F_{q^e}$
is a bijection, and $\pi_t|_{\mathcal T_t^*}:
\mathcal T_t^*\longrightarrow\mathbb F_{q^e}^*$
is a group isomorphism. We denote its inverse bijection
$\tau_t:\mathbb F_{q^e}\longrightarrow\mathcal T_t$,
as the Teichm\"uller lift.

This isomorphism also yields the following power-sum formula over $\mathcal T_t$. Since $\mathcal T_t^*$ is cyclic of order $q^e-1$, the finite geometric-series identity gives, for every positive integer $M$,
\begin{equation}\label{eq:power_sumTt}
\sum_{x\in \mathcal T_t}x^M=
\begin{cases}
q^e-1, & q^e-1\mid M,\\
0, & q^e-1\nmid M.
\end{cases}
\end{equation}

We now provide a lifted analogue of \eqref{Hermit} over $\mathcal{A}_t$.


\begin{lemma}\label{lem:lifted_power_sum}
Let $1\le S<q^e-1$ and  $t=1+v_p(S)$. If
$G\in \mathcal{A}_t[X]$ reduces modulo $p$ to be a permutation of $\mathbb F_{q^e}$, then $\sum_{x\in \mathcal T_t}G(x)^S=0$ in $\mathcal{A}_t$.
\end{lemma}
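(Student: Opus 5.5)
Write $S=p^{s}S'$ with $p\nmid S'$, so $s=v_p(S)$ and $t=s+1$. The plan is to compare $G(x)$ with the Teichmüller lift of its reduction, and then to use the classical fact that raising to the $p^{s}$-th power kills perturbations by $p\mathcal A_t$.

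\emph{Step 1 (congruence for $p^s$-th powers).} We show: if $y,z\in\mathcal A_t$ and $y\equiv z\pmod{p}$, then $y^{p^j}\equiv z^{p^j}\pmod{p^{j+1}}$ for every $j\ge 0$. The proof is by induction on $j$. Write $y^{p^j}=z^{p^j}+p^{j+1}w$. Then
\[
y^{p^{j+1}}=\sum_{k=0}^{p}\binom{p}{k}z^{p^j(p-k)}(p^{j+1}w)^k .
\]
For $1\le k\le p-1$ each term is divisible by $p\cdot p^{j+1}$. The term with $k=p$ is divisible by $p^{p(j+1)}$, and $p(j+1)\ge j+2$. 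Taking $j=s$ gives $y^{p^s}=z^{p^s}$ in $\mathcal A_t$, since $p^{s+1}=0$ there.

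\emph{Step 2 (replace $G(x)$ by a Teichmüller element).} For $x\in\mathcal T_t$, set $\sigma(x)=\tau_t\bigl(\pi_t(G(x))\bigr)\in\mathcal T_t$. Then $G(x)\equiv\sigma(x)\pmod p$. By Step 1, $G(x)^{p^s}=\sigma(x)^{p^s}$, and hence $G(x)^S=\sigma(x)^S$.

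\emph{Step 3 ($\sigma$ is a bijection).} The map $\pi_t$ is compatible with polynomial evaluation: $\pi_t(G(x))=\bar G(\pi_t(x))$, where $\bar G$ is the reduction of $G$. Therefore $\sigma=\tau_t\circ\bar G\circ\pi_t|_{\mathcal T_t}$. This is a composition of bijections, because $\bar G$ permutes $\mathbb F_{q^e}$ by hypothesis.

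\emph{Step 4 (conclusion).} Reindexing the sum by the bijection $\sigma$ gives
\[
\sum_{x\in\mathcal T_t}G(x)^S=\sum_{x\in\mathcal T_t}\sigma(x)^S=\sum_{y\in\mathcal T_t}y^S .
\]
Since $1\le S<q^e-1$, we have $q^e-1\nmid S$. By \eqref{eq:power_sumTt} the last sum is $0$.

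The only real content is Step 1, and specifically the check that the binomial middle terms carry an extra factor of $p$. Everything else is bookkeeping with the Teichmüller bijection.
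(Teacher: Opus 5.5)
Your proof is correct and has the same structure as the paper's. You replace $G(x)$ by the Teichm\"uller lift of its reduction, show $G(x)^S$ equals that lift's $S$-th power in $\mathcal A_t$, then reindex through the induced bijection of $\mathcal T_t$ and apply the power-sum formula \eqref{eq:power_sumTt}. The only difference is in that middle identity: you write $S=p^sS'$ and iterate $y^{p^j}\equiv z^{p^j}\pmod{p^{j+1}}$, whereas the paper expands $(y_x+pz_x)^S$ directly and bounds $v_p\bigl(p^i\binom{S}{i}\bigr)\ge v_p(S)+i-v_p(i)\ge t$, and the two routes are interchangeable.
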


\begin{proof}
Let $\overline G$ denote the reduction of $G$ modulo $p$.
For every $x\in\mathcal T_t$, $\pi_t(G(x))=\overline G(\pi_t(x))$. Hence, if $y_x=\tau_t\bigl(\overline G(\pi_t(x))\bigr)\in\mathcal T_t$,
then $G(x)\equiv y_x\pmod p$. Thus we may write
$G(x)=y_x+pz_x$ for some $z_x\in\mathcal A_t$.
We claim that $G(x)^S=y_x^S$ in $\mathcal A_t$. Indeed,
$$G(x)^S-y_x^S=\sum_{i=1}^{S}
\binom{S}{i}y_x^{S-i}p^iz_x^i.$$
Using
$i\binom{S}{i}=S\binom{S-1}{i-1},$
we have
$v_p\binom{S}{i}\ge v_p(S)-v_p(i).$
Therefore
$$v_p\!\left(p^i\binom{S}{i}\right)
\ge v_p(S)+i-v_p(i)
\ge v_p(S)+1=t,$$
where we use $v_p(i)\le i-1$. This implies that $p^t\mid p^i\binom{S}{i}$ for $1\le i\le S$. Hence
$G(x)^S=y_x^S$. The permutation assumption makes $y_x$ run through $\mathcal{T}_t$, so \eqref{eq:power_sumTt} yields the desired result.
\end{proof}

\begin{proposition}\label{prop:lifted-binomial}
Assume that $F_{r,a}$ induces a permutation of $\mathbb{F}_{q^e}$.  Let $k$ be a positive integer such that  $S=(q-1)k<q^e-1$.  Then, for any lift $\widehat a\in \mathcal{A}_{1+v_p(S)}$ of $a$,
\begin{equation}\label{eq:lifted-sum}
\sum_{\substack{0\le J\le S\\ rk+J\equiv0\pmod L}}
\binom SJ\widehat a^{\,S-J}=0
\qquad\text{in }\mathcal{A}_{1+v_p(S)}.
\end{equation}
An integer \(J\) is called {\it admissible} for \((k,S)\) if \(0\le J\le S\) and \(rk+J\equiv0\pmod L\).  If \(J_0=\res{-rk}{L}\), then the admissible integers are \(J_0+uL\) lying in \([0,S]\).  In particular, if \(S<L\), there is at most one admissible integer; if it exists, then \(p^{1+v_p(S)}\mid\binom SJ\).
\end{proposition}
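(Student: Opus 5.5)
We apply Lemma~\ref{lem:lifted_power_sum} to the polynomial $G(X)=X^r(X^{q-1}+\widehat a)\in\mathcal A_t[X]$ with $t=1+v_p(S)$. Its reduction modulo $p$ is $F_{r,a}$, which permutes $\mathbb F_{q^e}$ by hypothesis, so
\[
\sum_{x\in\mathcal T_t}x^{rS}\bigl(x^{q-1}+\widehat a\bigr)^S=0 \quad\text{in }\mathcal A_t .
\]
Expanding by the binomial theorem gives
\[
\sum_{J=0}^{S}\binom SJ\widehat a^{\,S-J}\sum_{x\in\mathcal T_t}x^{rS+(q-1)J}=0 .
\]
Since $rS+(q-1)J=(q-1)(rk+J)$, the inner exponent is positive (as $r,k\ge1$). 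By \eqref{eq:power_sumTt}, the inner sum equals $q^e-1$ when $q^e-1\mid (q-1)(rk+J)$, that is, when $L\mid rk+J$, and equals $0$ otherwise. The identity therefore becomes $(q^e-1)\sum_{J\text{ admissible}}\binom SJ\widehat a^{\,S-J}=0$.

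Next we remove the factor $q^e-1$. It is an integer coprime to $p$, hence a unit in $\mathbb Z_{p^t}\subset\mathcal A_t$. Multiplying by its inverse yields \eqref{eq:lifted-sum}.

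For the description of admissible integers, the condition $rk+J\equiv0\pmod L$ says exactly that $J\equiv J_0=\res{-rk}{L}\pmod L$. The solutions in $[0,S]$ are therefore the integers $J_0+uL$ with $u\ge0$ lying in that interval, since $0\le J_0<L$. If $S<L$, two distinct admissible integers would differ by at least $L>S$, so at most one of them lies in $[0,S]$.

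In that case, suppose $J$ is the unique admissible integer. Then \eqref{eq:lifted-sum} reduces to $\binom SJ\widehat a^{\,S-J}=0$ in $\mathcal A_t$. Since $a\neq0$, the lift $\widehat a$ reduces to a nonzero element of the residue field, so $\widehat a$ is a unit and so is $\widehat a^{\,S-J}$. Hence the integer $\binom SJ$ vanishes in $\mathcal A_t$. By Section~2.1, an integer vanishes in $\mathcal A_t$ exactly when it is divisible by $p^t$, so $p^{1+v_p(S)}\mid\binom SJ$.

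No step here is a serious obstacle. The points needing care are checking that the exponent $(q-1)(rk+J)$ is positive, so that \eqref{eq:power_sumTt} applies, and checking that both $q^e-1$ and $\widehat a$ are units in $\mathcal A_t$.
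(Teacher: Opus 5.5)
Your proposal is correct and matches the paper's proof step for step. You apply the lifted power-sum lemma to $G(X)=X^r(X^{q-1}+\widehat a)$, expand, use $rS+(q-1)J=(q-1)(rk+J)$ with the Teichm\"uller power-sum formula, cancel the unit $q^e-1$, and invert the unit $\widehat a^{\,S-J}$ to get $p^{1+v_p(S)}\mid\binom SJ$. Your explicit check that the exponent $(q-1)(rk+J)$ is positive is a small detail the paper leaves implicit.
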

\begin{proof}
Let  $t=1+v_p(S)$ and $\widehat a$ be the lift of $a$, and let $G(X)=X^r(X^{q-1}+\widehat a)$. Applying Lemma \ref{lem:lifted_power_sum} to $G(X)$ yields that for any $1\le S<q^e-1$,
\begin{align*}
0=\sum_{x\in\mathcal T_t}G(x)^S
 &=\sum_{x\in\mathcal T_t}
 \sum_{0\le J\le S}\binom{S}{J}\widehat{a}^{S-J}x^{rS+(q-1)J}\\
 &=\sum_{0\le J\le S} \binom{S}{J}\widehat{a}^{S-J} \sum_{x\in\mathcal T_t}x^{rS+(q-1)J}\\
&=(q^e-1)\sum_{\substack{0\le J\le S\\ rk+J\equiv0\pmod L}}
\binom{S}{J}\widehat a^{\,S-J},
\end{align*}
where the last equality follows from \eqref{eq:power_sumTt}.
 Indeed,  $rS+(q-1)J=(q-1)(rk+J)$, so
$q^e-1\mid rS+(q-1)J$ if and only if $rk+J\equiv0\pmod L$.
Since $p\nmid q^e-1$, the element $q^e-1$ is a unit in
$\mathcal A_t$. Hence \eqref{eq:lifted-sum} is proved.

The congruence $rk+J\equiv0\pmod L$ is equivalent to
$J\equiv -rk\pmod L$. Thus, if $J_0=\res{-rk}{L}$, all admissible integers are of the
form $J_0+uL$ lying in $[0,S]$. In particular, if $S<L$,
there is at most one admissible integer. If such an admissible \(J\) exists, then \eqref{eq:lifted-sum} gives
$\binom{S}{J}\widehat a^{\,S-J}=0$ in $\mathcal A_t.$
Since $\widehat a$ is a unit and $t=1+v_p(S)$, we obtain
$p^{1+v_p(S)}\mid\binom{S}{J}.$
\end{proof}

Next, we apply Proposition~\ref{prop:lifted-binomial} to derive some necessary restrictions on the exponent of a permutation binomial $F_{r,a}$.

\begin{lemma}\label{lem2.3}
Assume $F_{r,a}(X)$ permutes $\mathbb F_{q^e}$, and let $\bar{r}=\langle r\rangle_L$. Then $1\le \bar r<L$.
 Moreover, for every integer \(1\le h<e\), we have $\langle-\bar rL_h\rangle_L\ge q^h>0.$  Equivalently, if $\bar rL_h=n_hL+\rho_h$ with $0\le \rho_h<L$, then $0<\rho_h\le L-q^h$. Furthermore, $\bar r\equiv1\pmod q.$ If \(e=2\), then $\bar r=1$.
\end{lemma}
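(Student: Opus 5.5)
The plan is to apply Proposition~\ref{prop:lifted-binomial} with the special choice $k=L_h$, which gives $S=(q-1)L_h=q^h-1$. For $1\le h<e$ we have $S=q^h-1<q^{e-1}\le L$, so there is at most one admissible integer. Since $p\nmid S$, we have $v_p(S)=0$ and $t=1$. Hence an admissible $J$ would force $p\mid\binom{q^h-1}{J}$. But every base-$p$ digit of $q^h-1$ equals $p-1$. By Kummer's theorem (or the case $E=0$ of Lemma~\ref{lem:kummer_special}), $\binom{q^h-1}{J}\not\equiv0\pmod p$ for every $0\le J\le q^h-1$. Therefore no admissible integer exists in $[0,q^h-1]$.

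Since $rk\equiv\bar rL_h\pmod L$, the smallest candidate is $J_0=\res{-\bar rL_h}{L}$, and we conclude $J_0\ge q^h$. This is the claimed inequality $\res{-\bar rL_h}{L}\ge q^h$. Writing $\bar rL_h=n_hL+\rho_h$, we get $\res{-\bar rL_h}{L}=L-\rho_h$ when $\rho_h>0$, and $0$ when $\rho_h=0$. So the inequality is equivalent to $0<\rho_h\le L-q^h$. Taking $h=1$ (where $L_1=1$) gives $\res{-\bar r}{L}\ge q>0$, hence $\bar r\neq0$, i.e.\ $1\le\bar r<L$.

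For the congruence $\bar r\equiv1\pmod q$, I would use the case $h=e-1$ and the identity $qL_{e-1}=L-1$. Set $j=\res{-\bar rL_{e-1}}{L}$. Then $q^{e-1}\le j\le L-1$ and $qj\equiv -\bar r(L-1)\equiv\bar r\pmod L$. Write $qj=cL+\bar r$ with $0\le\bar r<L$, so that $c=\lfloor qj/L\rfloor$. From $qj\ge q^e=(q-1)L+1$ we get $c\ge q-1$, and from $qj<qL$ we get $c\le q-1$. Thus $c=q-1$ and $\bar r=qj-(q-1)L$. Since $L\equiv1\pmod q$, this reduces to $\bar r\equiv-(q-1)\equiv1\pmod q$.

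Finally, when $e=2$ we have $L=q+1$. The conditions $1\le\bar r<q+1$ and $\bar r\equiv1\pmod q$ leave only $\bar r=1$.

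The only step needing care is the bound on $c$, which pins down the quotient exactly. It relies on $q^e=(q-1)L+1$, and that bound is exactly what the $h=e-1$ inequality provides.
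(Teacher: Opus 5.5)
Your proof is correct and follows the paper's argument: the choice $k=L_h$, $S=q^h-1$ combined with $p\nmid\binom{q^h-1}{J}$, and the $h=e-1$ step that uses $qL_{e-1}=L-1$ to force the quotient to be $q-1$, are exactly what the paper does. The differences are cosmetic. You get $\bar r\neq0$ from the case $h=1$, which is the same computation as the paper's separate $k=1$ step, and you settle $e=2$ via $\bar r\equiv1\pmod q$ rather than via $0<\rho_1\le L-q=1$.
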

\begin{proof}
First, \(\bar r\neq0\). Otherwise $r\equiv0\pmod L$. Taking $k=1$ and $S=q-1<L$ in Proposition \ref{prop:lifted-binomial}, the only admissible index is $J=0$. Therefore
$\binom {q-1} 0\widehat {a}^{\,q-1}=0$ in $\mathcal{A}_{1+v_p(S)}=\mathcal{A}_1$. This yields \(\widehat{a}^{q-1}=0\) in $\mathcal{A}_{1}$, impossible because $a\neq0$. Hence $1\le\bar r<L.$

Fix $1\le h<e$ and take $k=L_h$, so that $S=(q-1)k=q^h-1<L$.

\noindent{\bf Claim.} For the above choice $k=L_h$, there is no admissible index $J\in[0,S]$. 

\noindent{\it Proof of Claim.}
By Proposition \ref{prop:lifted-binomial},  if an admissible index $J\in[0,S]$ exists,  then $p^{1+v_p(S)}\mid\binom SJ$.  Since $v_p(S)=0$, this reduces to $p\mid\binom SJ$. On the other hand, since $S=p^{mh}-1$, in $\mathbb{F}_p[X]$, one has
$(1+X)^S=(1+X^{p^{mh}})/(1+X)=\sum_{J=0}^S(-1)^JX^J$. Comparison with the binomial expansion yields $\binom SJ\equiv(-1)^J\pmod p$.  Therefore $p\nmid\binom SJ$ for $0\le J\le S.$  $\square$ 

Therefore, $\langle-rk\rangle_L=\langle-\bar rL_h\rangle_L\ge q^h>0$.
In particular, write $\bar{r}L_h=n_hL+\rho_h$ with $0\le\rho_h<L$. Then $0<\rho_h<L$, because $\rho_h=0$ would imply $\langle -\bar r L_h\rangle_L=0$, contradicting $\langle -\bar r L_h\rangle_L\ge q^h>0$.

It remains to prove $\bar r\equiv1\pmod q$. Take $h=e-1$ and $J_0'=\langle-\bar rL_{e-1}\rangle_L.$ Then $J_0'\ge q^{e-1}$. Observe that \(qL_{e-1}=L-1\), so
$$qJ_0'\equiv -q\bar{r}L_{e-1}\!\pmod{L} = -\bar{r}(L-1)\equiv \bar{r}\!\pmod{L}.$$
 Thus, $qJ_0'=\bar r+uL$ from some $0\le u\leq q-1$,  since $0\le J_0'<L$ and $1\le\bar r<L$. If $u\le q-2$, then $qJ_0'\leq (L-1)+(q-2)L=q^e-2.$
This contradicts $qJ_0'\geq q^e$. Hence $u=q-1$, and thus $qJ_0'=\bar{r}+(q-1)L$.  Reducing modulo $q$, we get \(\bar{r}\equiv L\equiv1\pmod{q}\).

Finally, if $e=2$, then $h=1$ and $0<\rho_1\leq L-q=1$.  Since $\bar r<L$ and $\bar r=n_1L+\rho_1$, necessarily $n_1=0$, and thus $\bar r=\rho_1=1$.
\end{proof}

\section{Arithmetic reduction of the exponent condition}

\begin{proposition}\label{prop:quotient-divisibility}
Assume that $F_{r,a}$ permutes $\mathbb{F}_{q^e}$.  For $1\le h<e$, write uniquely $\bar rL_h=n_hL+\rho_h$ with $0\le\rho_h<L$. Then $q\mid n_h$ for every $1\le h<e$.
\end{proposition}

The remainder of this section is devoted to the proof of Proposition~\ref{prop:quotient-divisibility}.  The case $e=2$ is an immediate result from Lemma~\ref{lem2.3}, because then $\bar r=1$ and $n_1=0$.  We therefore assume \(e\ge3\) throughout the auxiliary arguments below.

\begin{definition}\label{def:bad_indices}
Let $e\ge3$ and $1\le\bar r<L$.  For each $1\le h<e$, write uniquely $\bar{r}L_h = n_h L + \rho_h$ with $0\le \rho_h < L$.  We call $h$  \emph{good} if $q\mid n_h$ and \emph{bad} otherwise, and set $\mathcal B:=\{\,h\in\{1,\ldots,e-1\}:q\nmid n_h\,\}.$
\end{definition}
%

In this terminology, Proposition~\ref{prop:quotient-divisibility} amounts to proving that \(\mathcal B=\varnothing\).  We argue by contradiction.  Assuming \(\mathcal B\ne\varnothing\), we first analyze the arithmetic structure of a general bad parameter and identify two additional properties under which it is incompatible with the permutation property of \(F_{r,a}\).  We then show that if $\mathcal{B}\neq\varnothing$, the last bad \(h\in\mathcal B\) necessarily has both properties.

The following standing assumptions are direct consequences of Lemma~\ref{lem2.3} and will be used throughout the rest of this section.

\begin{hypothesis}\label{hyp}
Assume $e\ge3$, $1\le\bar r<L$, and $\bar{r}\equiv1\pmod q$.  Set $n_0=\rho_0=0$, and for $1\le h<e$ write $\bar rL_h=n_hL+\rho_h$, where $0<\rho_h\le L-q^h$.
\end{hypothesis}

\subsection{Bad parameters and their arithmetic structure}

Since $L_1=1$ and $1\le\bar r<L$, we have $n_1=0$, so every bad parameter satisfies $h\ge2$. To analyze a bad parameter, we compare the Euclidean divisions of $\bar{r}L_{h-1}$ and $\bar{r}L_h$ by $L$. Since $L_h=qL_{h-1}+1$, the relations $\bar rL_{h-1}=n_{h-1}L+\rho_{h-1}$ and $\bar rL_h=n_hL+\rho_h$ yield $q\rho_{h-1}+\bar r=(n_h-qn_{h-1})L+\rho_h$.
We therefore introduce
\begin{equation}\label{def:delta}
\delta_h=n_h-qn_{h-1}
=\left\lfloor\frac{q\rho_{h-1}+\bar r}{L}\right\rfloor.
\end{equation}
Thus, $\delta_h$ measures the change in the quotient when passing from the division of $\bar rL_{h-1}$ by $L$ to that of $\bar rL_h$ by $L$.

\begin{lemma}\label{lem:delta}
Let $1\le h<e$. Then $h$ is good if and only if $\delta_h\in\{0,q\}$. Equivalently, $h$ is bad if and only if $1\le\delta_h\le q-1$.
\end{lemma}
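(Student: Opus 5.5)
The plan is to reduce the statement to a bound on \(\delta_h\) together with a divisibility observation. By \eqref{def:delta} we have \(n_h=qn_{h-1}+\delta_h\), so \(n_h\equiv\delta_h\pmod q\). Hence \(h\) is good, i.e.\ \(q\mid n_h\), if and only if \(q\mid\delta_h\). For \(h=1\) the convention \(n_0=\rho_0=0\) of Hypothesis~\ref{hyp} makes the relation hold as well, since \(\delta_1=\lfloor \bar r/L\rfloor=0=n_1\).

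It then suffices to show \(0\le\delta_h\le q\). Once this holds, the only multiples of \(q\) in that range are \(0\) and \(q\), so good means \(\delta_h\in\{0,q\}\) and bad means \(1\le\delta_h\le q-1\).

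For the bound I use the floor formula \(\delta_h=\lfloor (q\rho_{h-1}+\bar r)/L\rfloor\), which was derived from \(L_h=qL_{h-1}+1\) and the two Euclidean divisions. Nonnegativity is immediate because \(\rho_{h-1}\ge0\) and \(\bar r\ge1\). For the upper bound, \(0\le\rho_{h-1}\le L-1\) and \(1\le\bar r\le L-1\) (Lemma~\ref{lem2.3}) give
\[
q\rho_{h-1}+\bar r\le q(L-1)+(L-1)=(q+1)L-(q+1)<(q+1)L,
\]
and therefore \(\delta_h\le q\).

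There is no real obstacle here. The only point needing care is the identity \(q\rho_{h-1}+\bar r=\delta_hL+\rho_h\) with \(0\le\rho_h<L\), which is what justifies writing \(\delta_h\) as a floor. It follows by multiplying \(\bar rL_{h-1}=n_{h-1}L+\rho_{h-1}\) by \(q\), adding \(\bar r\), and comparing with the division of \(\bar rL_h\) by \(L\).
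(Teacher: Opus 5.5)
Your proposal is correct and follows essentially the same route as the paper: it reads off \(\delta_h\equiv n_h\pmod q\) from \(\delta_h=n_h-qn_{h-1}\), and bounds \(0\le\delta_h\le q\) via the floor formula using \(\rho_{h-1},\bar r<L\). You make the upper-bound computation and the justification of the floor expression explicit, which the paper leaves implicit.
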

\begin{proof}
 Since $\bar{r}=\langle r\rangle_L$ and $\rho_{h-1}=\langle\bar{r}L_{h-1}\rangle_L$, by \eqref{def:delta}, we have $0\le\delta_h\le q$. Moreover, $\delta_h=n_h-qn_{h-1}$ implies $\delta_h\equiv n_h\pmod q$. Therefore $q\mid n_h$ if and only if $q\mid\delta_h$, which, together with $0\le\delta_h\le q$, is equivalent to
$\delta_h\in\{0,q\}$. The complementary case is precisely
$1\le\delta_h\le q-1$.
\end{proof}

\begin{lemma}\label{lem:bad-parameter}
Assume Hypothesis~\ref{hyp} holds, and let \(h\in\mathcal B\).
Define $J_h=\left\langle-\bar{r}q^{h-1}L_{e-h}\right\rangle_L$.
There exists a unique integer $\ell_h=(\bar{r}-1)/q-n_{h-1}$ such that
\begin{align*}
&J_h=\ell_hL-\bar r q^{h-1}L_{e-h}
=\frac{\rho_h+(\delta_h-1)L}{q},\\
&0<J_h<(q^{e-h}-1)q^{h-1}<L.
\end{align*}
Let $J_h=Q_hq^{h-1}+R_h$ with $0\le R_h<q^{h-1}$. Then $(\delta_h-1)L_{e-h}\le Q_h<\delta_hL_{e-h}$.

Furthermore, there exists a unique integer \(t_h\) such that $\ell_h=n_{e-h}q^{h-1}+t_h$ with $1\leq t_h<q^{h-1}$.
Let $\nu_h\in\{1,\ldots,q-1\}$ be the unique integer satisfying $\nu_h\equiv t_h\pmod{q-1}$. Then
\[
R_h=\frac{\nu_hq^{h-1}-t_h}{q-1},
\qquad\text{and}\qquad
Q_h+1\equiv n_{e-h}+\nu_h\pmod q.\]
In particular, if $e-h$ is good, then
$v_p(Q_h+1)=v_p(\nu_h)<m.$
\end{lemma}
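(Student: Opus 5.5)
The plan is to reduce everything to two Euclidean identities and then read off residues modulo \(q^{h-1}\) and modulo \(q\). Throughout, I use Hypothesis~\ref{hyp}, the identity \(L=q^hL_{e-h}+L_h\), and the fact that \(1\le\delta_h\le q-1\) for \(h\in\mathcal B\) (Lemma~\ref{lem:delta}).

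First I will establish the formula for \(J_h\). Since \(q^{h-1}L_{e-h}=(L-L_h)/q\) and \(\bar rL_h=n_hL+\rho_h\), we get
\[-\bar rq^{h-1}L_{e-h}=\frac{(n_h-\bar r)L+\rho_h}{q}.\]
Adding \(\ell_hL\) with \(\ell_h=(\bar r-1)/q-n_{h-1}\) (an integer because \(\bar r\equiv1\pmod q\)) turns the coefficient of \(L\) in the numerator into \(q\ell_h-\bar r+n_h=n_h-qn_{h-1}-1=\delta_h-1\). This gives \(J_h=(\rho_h+(\delta_h-1)L)/q\).

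Next I check that this value is the least nonnegative residue, which also gives uniqueness of \(\ell_h\). From \(0<\rho_h\le L-q^h\) and \(0\le\delta_h-1\le q-2\) we get \(0<J_h\le\bigl((q-1)L-q^h\bigr)/q=(q^e-1-q^h)/q\), and the right-hand side is less than \((q^{e-h}-1)q^{h-1}\). The bounds on \(Q_h\) come from the same expression. The lower bound uses \(J_h\ge(\delta_h-1)L/q\ge(\delta_h-1)q^{h-1}L_{e-h}\). For the upper bound, \(J_h\le(\delta_hL-q^h)/q\), and this is less than \(\delta_hq^{h-1}L_{e-h}\) because \(\delta_hL_h/q<q^{h-1}\).

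For \(t_h\), I substitute \(\bar rL_{e-h}=n_{e-h}L+\rho_{e-h}\) into \(J_h=\ell_hL-\bar rq^{h-1}L_{e-h}\). This gives
\[J_h=t_hL-q^{h-1}\rho_{e-h},\qquad t_h:=\ell_h-n_{e-h}q^{h-1}.\]
Since \(J_h>0\) and \(\rho_{e-h}>0\), we get \(t_h\ge1\). The bound \(J_h<(q^{e-h}-1)q^{h-1}\) together with \(\rho_{e-h}\le L-q^{e-h}\) gives \(t_hL<q^{h-1}(L-1)\), hence \(t_h<q^{h-1}\).

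Reducing the identity modulo \(q^{h-1}\), and using \(L\equiv L_{h-1}\pmod{q^{h-1}}\), gives \(R_h\equiv t_hL_{h-1}\pmod{q^{h-1}}\). The proposed value \(R=(\nu_hq^{h-1}-t_h)/(q-1)\) is an integer because \(q^{h-1}\equiv1\) and \(\nu_h\equiv t_h\pmod{q-1}\). It satisfies \((q-1)R\equiv-t_h\equiv(q-1)t_hL_{h-1}\pmod{q^{h-1}}\), and \(q-1\) is invertible modulo \(q^{h-1}\), so \(R\equiv R_h\). Finally \(0<R<q^{h-1}\), using \(1\le t_h<q^{h-1}\) and \(1\le\nu_h\le q-1\), so \(R=R_h\).

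For the congruence on \(Q_h\), I compute
\[(q-1)(J_h-R_h)=t_hq^e-\nu_hq^{h-1}-(q-1)q^{h-1}\rho_{e-h},\]
using \((q-1)L+1=q^e\). Dividing by \(q^{h-1}\) gives
\[(q-1)Q_h=t_hq^{e-h+1}-\nu_h-(q-1)\rho_{e-h}.\]
Reducing modulo \(q\), and using \(e-h+1\ge2\), yields \(Q_h\equiv\nu_h-\rho_{e-h}\pmod q\). Moreover \(\rho_{e-h}\equiv1-n_{e-h}\pmod q\), because \(\bar r\equiv L_{e-h}\equiv L\equiv1\pmod q\). Hence \(Q_h+1\equiv n_{e-h}+\nu_h\pmod q\).

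If \(e-h\) is good, then \(Q_h+1\equiv\nu_h\pmod q\) with \(1\le\nu_h\le q-1\). It follows that \(v_p(Q_h+1)=v_p(\nu_h)<m\), since \(\nu_h<p^m\).

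I expect the main obstacle to be the bookkeeping: the sharp bound \(t_h<q^{h-1}\), which depends on the strict inequality \(J_h<(q^{e-h}-1)q^{h-1}\), and the range check showing that the formula for \(R_h\) really is the least residue. The algebra itself is mechanical once the two expressions for \(J_h\) are in place.
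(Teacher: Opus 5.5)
Your proof is correct and follows essentially the same route as the paper. Both arguments use the same computation $J_h=(\rho_h+(\delta_h-1)L)/q$ with the same bounds, extract $t_h$ from $\bar rL_{e-h}=n_{e-h}L+\rho_{e-h}$, and then reduce modulo $q^{h-1}$ and modulo $q$. The differences are cosmetic. You verify the candidate formula for $R_h$ by checking the congruence and the range, whereas the paper derives it through the auxiliary quotient $B=(\ell_h+(q-1)R_h)/q^{h-1}$. You also read off the congruence for $Q_h$ from the form $J_h=t_hL-q^{h-1}\rho_{e-h}$, while the paper uses the $\ell_h,\bar r$ form.
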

\begin{proof}

We first determine $J_h$. By definition,
\(J_h\) is the unique integer in \([0,L)\) satisfying
$J_h\equiv-\bar r q^{h-1}L_{e-h}\pmod L$. Thus it suffices to find
an integer \(\ell_h\) for which $0<\ell_h L-\bar{r}q^{h-1}L_{e-h}<L.$ We claim that
\(\ell_h=(\bar r-1)/q-n_{h-1}\) has this property.
It is an integer
\(\bar r\equiv1\pmod q\). Notice that \(L=q^hL_{e-h}+L_h\), it follows that
\begin{align*}
q\bigl(\ell_hL-\bar r q^{h-1}L_{e-h}\bigr)
&=(\bar r-1-qn_{h-1})L-\bar r q^hL_{e-h}\\
&=\bar rL_h-(qn_{h-1}+1)L\\
&=(n_h-qn_{h-1}-1)L+\rho_h\\
&=(\delta_h-1)L+\rho_h.
\end{align*}
Therefore $\ell_hL-\bar{r}q^{h-1}L_{e-h}=\left(\rho_h+(\delta_h-1)L\right)/q$, which is positive since \(\delta_h\ge 1\) by Lemma~\ref{lem:delta}.
On the other hand, using \(\rho_h\le L-q^h\), \(L=q^hL_{e-h}+L_h\), \(\delta_h\le q-1\), and \((q-1)L_h=q^h-1\), we obtain
\begin{align*}
\ell_hL-\bar r q^{h-1}L_{e-h}
\le \frac{\delta_hL-q^h}{q}
&=\delta_hq^{h-1}L_{e-h}+\frac{\delta_hL_h}{q}-q^{h-1}\\
&<\delta_hq^{h-1}L_{e-h}\\
&\le(q^{e-h}-1)q^{h-1}<L.
\end{align*}
Hence this integer is exactly $J_h$, and the stated bounds for $J_h$ holds.

Now let $J_h=Q_hq^{h-1}+R_h$ with $0\le R_h<q^{h-1}$, that is, $Q_h=\lfloor J_h/q^{h-1}\rfloor$. Since $\rho_h>0$ and $L=q^hL_{e-h}+L_h>q^hL_{e-h}$, we have
$J_h>(\delta_h-1)q^{h-1}L_{e-h}$, while the preceding upper bound gives $J_h<\delta_hq^{h-1}L_{e-h}$.  Dividing by $q^{h-1}$ and taking floors yields
$(\delta_h-1)L_{e-h}\le Q_h<\delta_hL_{e-h}$.

To obtain $t_h$, use $J_h=\ell_hL-\bar r q^{h-1}L_{e-h}$ and $\bar{r}L_{e-h}=n_{e-h}L+\rho_{e-h}$.  we obtain
$$\frac{\ell_h}{q^{h-1}}
=n_{e-h}+\frac{\rho_{e-h}+J_h/q^{h-1}}{L}.$$
Because $0<\rho_{e-h}\le L-q^{e-h}$ and $0<J_h<(q^{e-h}-1)q^{h-1}$, so the last term lies strictly between $0$ and $1$. Hence
$\ell_h=n_{e-h}q^{h-1}+t_h$ for a unique integer $t_h$ satisfying $1\leq t_h<q^{h-1}$.

Reducing $J_h=\ell_hL-\bar r q^{h-1}L_{e-h}$ modulo $q^{h-1}$ gives
$R_h\equiv t_hL\pmod{q^{h-1}}$. As $L\equiv1\pmod p$, one has $\gcd(L,q^{h-1})=1$. Thus $R_h\ne0$; otherwise $q^{h-1}\mid t_h$, contradicting $1\leq t_h<q^{h-1}$.

Let $\nu_h\in\{1,\ldots,q-1\}$ be the unique integer satisfying $\nu_h\equiv t_h\pmod{q-1}$. From the two representations of $J_h$, we have $R_h\equiv\ell_hL\pmod{q^{h-1}}$, and thus
$q^{h-1}\mid \ell_h+(q-1)R_h$ since $(q-1)L=q^e-1\equiv-1\pmod{q^{h-1}}$. Put
$B=\left(\ell_h+(q-1)R_h\right)/q^{h-1}$. Using $\ell_h=n_{e-h}q^{h-1}+t_h$ together with $0<t_h,R_h<q^{h-1}$ gives $n_{e-h}<B<n_{e-h}+q$. Moreover, $q^{h-1}\equiv1\pmod{q-1}$, and hence
$B\equiv\ell_h\equiv n_{e-h}+t_h\equiv n_{e-h}+\nu_h\pmod{q-1}$. Since both $B-n_{e-h}$ and $\nu_h$ lie in $\{1,\ldots,q-1\}$, we obtain $B=n_{e-h}+\nu_h$. Therefore
$\ell_h+(q-1)R_h=(n_{e-h}+\nu_h)q^{h-1}$, and substitution of $\ell_h=n_{e-h}q^{h-1}+t_h$ yields
$R_h=(\nu_hq^{h-1}-t_h)/(q-1)$.

Finally, substitute this identity into
$J_h=Q_hq^{h-1}+R_h=\ell_hL-\bar r q^{h-1}L_{e-h}$. Multiplying by $q-1$ and using $(q-1)L=q^e-1$, $(q-1)L_{e-h}=q^{e-h}-1$, and $t_h=\ell_h-n_{e-h}q^{h-1}$ gives
$$(q-1)Q_h+n_{e-h}+\nu_h
=\ell_hq^{e-h+1}-\bar r q^{e-h}+\bar r.$$
Reducing modulo $q$ and using $\bar r\equiv1\pmod q$ gives
$Q_h+1\equiv n_{e-h}+\nu_h\pmod q$.

If $e-h$ is good, then $q\mid n_{e-h}$, so $Q_h+1\equiv\nu_h\pmod{p^m}$. Since $1\le\nu_h\le p^m-1$, we have $v_p(\nu_h)<m$, and the congruence implies $v_p(Q_h+1)=v_p(\nu_h)<m$.
\end{proof}

\subsection{Exclusion of a bad parameter}


The preceding lemma provides some arithmetic characterizations associated with a bad $h\in\mathcal{B}$,  but does not by itself rule out its
existence. We next show that, under two further conditions, such a bad parameter cannot occur when $F_{r,a}$ is a permutation.

\begin{proposition}\label{prop:exclusion}
Under Hypothesis~\ref{hyp}, let $h\in\mathcal B$ and let $J_h,Q_h,R_h,t_h,\nu_h$ be as in Lemma~\ref{lem:bad-parameter}.  Suppose that $e-h$ is good and $(\nu_h-1)L_{e-h}\le Q_h<\nu_hL_{e-h}.$
Then there exists no $a\in\mathbb{F}_{q^e}^{*}$ such that $F_{r,a}$ permutes $\mathbb{F}_{q^e}$.
\end{proposition}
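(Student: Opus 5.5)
The plan is to contradict Proposition~\ref{prop:lifted-binomial} with one well-chosen exponent $S<L$, evaluating the single surviving binomial coefficient with Lemma~\ref{lem:kummer_special}.

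\textbf{Choice of exponent.} I would take $k=q^{h-1}L_{e-h}$, so that
\[
S=(q-1)k=q^{h-1}(q^{e-h}-1)=p^{\alpha}(p^{\beta}-1),\qquad \alpha=m(h-1),\ \beta=m(e-h).
\]
This $S$ has exactly the shape required in Lemma~\ref{lem:kummer_special}. Moreover $S=q^{e-1}-q^{h-1}<L$ and $v_p(S)=m(h-1)$.

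\textbf{The admissible index.} The candidate admissible index is $J_0=\res{-\bar r q^{h-1}L_{e-h}}{L}=J_h$, using $r\equiv\bar r\pmod L$. By Lemma~\ref{lem:bad-parameter}, $0<J_h<(q^{e-h}-1)q^{h-1}=S$, so $J_h$ lies in $[0,S]$. Since $S<L$, it is the unique admissible index. Proposition~\ref{prop:lifted-binomial} therefore forces
\[
p^{1+m(h-1)}\mid\binom{S}{J_h}.
\]

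\textbf{Valuation of the coefficient.} Write $J_h=Q_hq^{h-1}+R_h$. The proof of Lemma~\ref{lem:bad-parameter} shows $R_h\neq0$, so the second case of Lemma~\ref{lem:kummer_special} applies with $Q=Q_h$ and $E=R_h$:
\[
v_p\binom{S}{J_h}=m(h-1)+v_p(Q_h+1)-v_p(R_h).
\]
Because $e-h$ is good, Lemma~\ref{lem:bad-parameter} gives $v_p(Q_h+1)=v_p(\nu_h)$. The contradiction therefore reduces to proving
\[
v_p(R_h)\ge v_p(\nu_h).
\]

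\textbf{Where the extra hypothesis enters.} The assumption $(\nu_h-1)L_{e-h}\le Q_h<\nu_hL_{e-h}$ is used together with the bound $(\delta_h-1)L_{e-h}\le Q_h<\delta_hL_{e-h}$ from Lemma~\ref{lem:bad-parameter}. These two windows are disjoint for different integer indices, so they force $\nu_h=\delta_h$.

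Next, $(q-1)R_h=\nu_hq^{h-1}-t_h$ and $p\nmid q-1$, so $v_p(R_h)=v_p(\nu_hq^{h-1}-t_h)$. Since $h\ge2$ and $v_p(\nu_h)<m\le v_p(\nu_hq^{h-1})$, it suffices to show $p^{v_p(\nu_h)}\mid t_h$.

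\textbf{Main obstacle.} Proving $p^{v_p(\nu_h)}\mid t_h$ is the step I expect to be hardest. My first attempt would combine three identities:
\begin{align*}
qJ_h&=\rho_h+(\delta_h-1)L,\\
\ell_h&=(\bar r-1)/q-n_{h-1}=n_{e-h}q^{h-1}+t_h,\\
(q-1)Q_h+n_{e-h}+\nu_h&=\ell_hq^{e-h+1}-\bar rq^{e-h}+\bar r.
\end{align*}
Substituting $\delta_h=\nu_h$ should express $t_h$ modulo $q$ in terms of $\nu_h$ and quantities divisible by $q$ (using $q\mid n_{e-h}$ and $\bar r\equiv1\pmod q$). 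From that expression one would read off $t_h\equiv0\pmod{p^{v_p(\nu_h)}}$.

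\textbf{Fallback.} If this divisibility cannot be extracted directly, I would replace $k$ by a multiple such as $\nu_hq^{h-1}L_{e-h}$, or adjust $S$ by a $p$-power factor. The aim would be that the unique admissible index has $E$-part divisible by the needed power of $p$, rerunning the same Kummer computation.

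Once $v_p\binom{S}{J_h}\le m(h-1)<1+v_p(S)$ is established, this contradicts the divisibility forced by Proposition~\ref{prop:lifted-binomial}. Hence no $a\in\mathbb F_{q^e}^*$ makes $F_{r,a}$ a permutation.
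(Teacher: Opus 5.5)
\textbf{There is a genuine gap in the step you call the main obstacle.} Your reductions up to that step are correct: the unique admissible index is $J_h$, the valuation formula holds, and everything reduces to $v_p(R_h)\ge v_p(\nu_h)$, equivalently $p^{v_p(\nu_h)}\mid t_h$. But this divisibility is false in general, so the single exponent $S=q^{h-1}(q^{e-h}-1)$ cannot give the contradiction.

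Here is a counterexample satisfying every hypothesis. Take $q=4$, $e=4$ (so $L=85$), $\bar r=25$, $h=3$.
\begin{itemize}
\item $(n_1,n_2,n_3)=(0,1,6)$ and $(\rho_1,\rho_2,\rho_3)=(25,40,15)$, so Hypothesis~\ref{hyp} holds and $\mathcal B=\{2,3\}$.
\item $e-h=1$ is good, $\delta_3=2$, $\ell_3=t_3=5$ and $\nu_3=2$.
\item $J_3=25=1\cdot16+9$, so $Q_3=1$ and $R_3=9$, and the window $1\le Q_3<2$ holds.
\end{itemize}
Yet $v_2(R_3)=0<1=v_2(\nu_3)$. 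With your $S=48$ and $t=5$, the unique admissible index is $J=25$, and $v_2\binom{48}{25}=4+1-0=5=t$. So \eqref{eq:lifted-sum} is satisfied and nothing is contradicted. Note that $\nu_3=\delta_3$ already holds here. Deducing $\nu_h=\delta_h$ from the window hypothesis, which is the only use you make of it, therefore does not help. Your argument is complete only when $p\nmid\nu_h$, for example when $q$ is prime.

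\textbf{The missing idea is the $p$-power scaling you list as a fallback but do not carry out.} Scale $k$ by $p^w$ with $w=v_p(\nu_h)$, so that $S=p^wq^{h-1}(q^{e-h}-1)$ and $t=1+w+m(h-1)$. The main index becomes $J_0=p^wJ_h=p^{t-1}Q_h+p^wR_h$. Lemma~\ref{lem:kummer_special} then gives $v_p\binom{S}{J_0}=t-1-v_p(R_h)<t$ whatever $R_h$ is. In the example, $v_2\binom{96}{50}=5<6$.

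The price is that $S$ may now exceed $L$. Additional admissible indices $J_u=p^wJ_h+uL$ with $0<|u|<p^w$ can then appear, and you must show they vanish in $\mathcal A_t$. This is where the hypothesis $(\nu_h-1)L_{e-h}\le Q_h<\nu_hL_{e-h}$ is really needed:
\begin{itemize}
\item Write $J_u=p^{t-1}Q_u+E_u$ with $Q_u=Q_h+uqL_{e-h}/p^w$ and $E_u=p^wR_h+uL_h$.
\item The window hypothesis forces $1\le \nu_h+uq/p^w\le q-1$, which gives $0<E_u<p^{t-1}$.
\item Then $v_p(E_u)=v_p(u)$ and $v_p(Q_u+1)\ge 1+v_p(u)$, so $v_p\binom{S}{J_u}\ge t$.
\end{itemize}
None of this appears in your proposal, and without it the proof does not go through when $p\mid\nu_h$.
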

\begin{proof}
 Let $w=v_p(\nu_h)$ and assume $e-h$ is good. By Lemma~\ref{lem:bad-parameter}, $v_p(Q_h+1)=w<m$.  Suppose for contradiction that $F_{r,a}$ is a permutation.  Apply Proposition~\ref{prop:lifted-binomial} with
$k=p^wq^{h-1}L_{e-h}$ and $S=p^w(q^{e-h}-1)q^{h-1}$. Then $S<q(q^{e-1}-q^{h-1})<q^{e}-1$, as required.
Moreover, we have $v_p(S)=w+m(h-1)$ because $p\nmid q^{e-h}-1$. We thus consider \eqref{eq:lifted-sum} in $\mathcal{A}_{t}$ with $t=1+w+m(h-1)$.

We use the following three claims to complete the proof.

\noindent{\bf Claim 1.} Every admissible index lying in $[0,S]$ is of the form $J_u=p^wJ_h+uL$ with $|u|<p^w$.

\noindent{\it Proof of Claim 1.} Since
$r\equiv\bar r\pmod L$ and $J_h=\res{-\bar{r}q^{h-1}L_{e-h}}{L}$,
we have $-rk\equiv p^wJ_h \pmod L.$
Therefore every admissible index lying in $[0,S]$ is of the form $J_u=p^wJ_h+uL$, where $u\in\mathbb{Z}$.
By Lemma~\ref{lem:bad-parameter},
$0<J_h<(q^{e-h}-1)q^{h-1}<L$, and hence $S<p^wL$. Consequently, $|u|<p^w$. Indeed, if $u\le -p^w$, then $J_u\le p^w(J_h-L)<0$, while if
$u\ge p^w$, then $J_u>p^wL>S$. $\square$   

\medskip
\noindent{\bf Claim 2.} $J_0=p^wJ_h$ is admissible, and
$\binom{S}{J_0}\ne0$ in $\mathcal A_t$.

\noindent{\it Proof of Claim 2.}
From the proof of Claim 1, we see that $-rk\equiv p^wJ_h=J_0 \pmod L$. Moreover, $0<J_h<(q^{e-h}-1)q^{h-1}$ implies $0<p^wJ_h<p^w(q^{e-h}-1)q^{h-1}=S$, i.e., $0<J_0<S$. Thus $J_0$ is admissible.  Next, we consider $\binom{S}{J_0}$ in $\mathcal{A}_t$. Since $S=p^{\,t-1}\bigl(p^{m(e-h)}-1\bigr)$ and
$$J_0=p^wJ_h
=p^wq^{h-1}Q_h+p^wR_h
=p^{t-1}Q_h+p^wR_h,$$
with $0<p^wR_h<p^wq^{h-1}=p^{t-1}$,
Lemma~\ref{lem:kummer_special} gives
\begin{align*}
v_p\binom{S}{J_0}
&=t-1+v_p(Q_h+1)-v_p(p^wR_h)\\
&=t-1+w-\bigl(w+v_p(R_h)\bigr)\\
&=t-1-v_p(R_h)<t.
\end{align*}
Therefore $p^t\nmid \binom{S}{J_0}$, that is, $\binom{S}{J_0}\ne0$ in $\mathcal A_t$. $\square$

\medskip
\noindent{\bf Claim 3.} For each admissible $J_u$ with $u\ne0$, $\binom{S}{J_u}=0$ in $\mathcal{A}_{t}$.

\noindent{\it Proof of Claim 3.}
 If $w=0$, Claim~1 gives $u=0$, and there is nothing to prove. Thus assume $w>0$.
 It suffices to show that $v_p\binom{S}{J_u}\geq t$. Using the relations $L=q^hL_{e-h}+L_h$ and $J_h=Q_hq^{h-1}+R_h$, we may write
$$J_u=p^wq^{h-1}Q_u+E_u=p^{t-1}Q_u+E_u,$$
where $Q_u=Q_h+uqL_{e-h}/p^w$ and $E_u=p^wR_h+uL_h$.
Since $w<m$, both $Q_u$ and $E_u$ are integers. Moreover, the assumption $(\nu_h-1)L_{e-h}\le Q_h<\nu_hL_{e-h}$
gives
\begin{equation}\label{eq:Q_u}
\left(\nu_h+\frac{uq}{p^w}-1\right)L_{e-h}
\le Q_u<
\left(\nu_h+\frac{uq}{p^w}\right)L_{e-h}.
\end{equation}
By Lemma~\ref{lem:bad-parameter}, $(q-1)R_h=\nu_hq^{h-1}-t_h$, together with $(q-1)L_h=q^h-1$, we obtain
\begin{equation}\label{eq:E_u}
(q-1)E_u=p^wq^{h-1}\left(\nu_h+\frac{u q}{p^w}\right)
-(p^wt_h+u).
\end{equation}
Since $1\leq t_h<q^{h-1}$ and $|u|<p^w$ are integers, $1\le p^wt_h+u\le p^wq^{h-1}-1$.
 On the other hand, it is straightforward to check that $1\le \nu_h+\frac{q}{p^w}u\le q-1$.
Indeed ,if $\nu_h+qu/p^w\le0$, then \eqref{eq:Q_u} and \eqref{eq:E_u} imply
$Q_u<0$ and $E_u<0$, and hence $J_u<0$, a contradiction.
If $\nu_h+qu/p^w\ge q$, then $Q_u\ge(q-1)L_{e-h}=q^{e-h}-1$ and $E_u>0$, so $J_u>p^wq^{h-1}(q^{e-h}-1)=S$, again a contradiction. Therefore, \eqref{eq:E_u} implies $0<E_u<p^wq^{h-1}=p^{\,t-1}$.

Therefore, by $S=p^{\,t-1}\bigl(p^{m(e-h)}-1\bigr)$ and $J_u=p^{t-1}Q_u+E_u$ with  $0<E_u<p^{\,t-1}$, Lemma \ref{lem:kummer_special} gives that
\begin{align*}
v_p\binom{S}{J_u}&=t-1+v_p(Q_u+1)-v_p(E_u)\\
&=t-1+v_p\left(Q_h+1+\frac{up^mL_{e-h}}{p^w}\right)-v_p\left(p^wR_h+uL_h\right).
\end{align*}
Since $u\ne0$ and $|u|<p^w$, we have
$v_p(u)<w$. As $L_h\equiv1\pmod p$, $v_p(p^wR_h+uL_h)=v_p(u)$.
Moreover, by $v_p(Q_h+1)=w$, $p\nmid L_{e-h}$, and $v_p\left(u qL_{e-h}/p^w\right)
=m-w+v_p(u)\ge1+v_p(u)$, we have $v_p(Q_u+1)\ge 1+v_p(u)$. Hence
$$v_p\binom{S}{J_u}\geq t-1+(1+v_p(u))-v_p(u)=t$$
Thus, for every admissible $J_u$ with $u\neq0$, $p^t\mid \binom{S}{J_u}=0$, so its image in $\mathcal A_t$. $\square$  

By Claim~3, every term in \eqref{eq:lifted-sum}
with $u\ne0$ vanishes in $\mathcal A_t$. Claim~2 shows that the coefficient of the term with $u=0$ does not vanish.
Since $a\neq0$ in $\mathbb{F}_{q^e}$, its lift $\widehat a$ is a unit in $\mathcal A_t$, and thus
$\binom{S}{J_0}\widehat a^{\,S-J_0}\ne0$ in $\mathcal A_t.$
This contradicts \eqref{eq:lifted-sum}. Hence \(F_{r,a}\) cannot be a
permutation of \(\mathbb F_{q^e}\).
\end{proof}

\subsection{Elimination of bad parameters}

In this subsection, assume that $\mathcal B\neq\varnothing$. The next two results together show that the maximal bad $h\in\mathcal B$ necessarily satisfies the two additional conditions in Proposition~\ref{prop:exclusion}.

\begin{proposition}\label{prop:maxB1}
Assume Hypothesis~\ref{hyp} and $\mathcal B\neq\varnothing$. Let $s=\min \mathcal B$ and $b=\max \mathcal B$ denote the smallest and largest elements of $\mathcal B$, respectively.
Then $s+b>e$.  In particular, $e-b$ is good.
\end{proposition}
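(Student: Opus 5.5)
We reduce to a statement about the base-$q$ digits of $\bar r$ and their cyclic rotations, and then argue by contradiction. The ``in particular'' is immediate once $s+b>e$ is known. Since $b\le e-1$, we have $1\le e-b$. Also $e-b<s=\min\mathcal B$, so $e-b\notin\mathcal B$, i.e.\ $e-b$ is good. The work is therefore in proving $s+b>e$.

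\textbf{Step 1: digit description.} Put $\alpha=\bar r/(q^e-1)\in(0,1)$. Its base-$q$ expansion $0.c_1c_2\cdots c_e\,c_1c_2\cdots$ is purely periodic with period $e$, and $c_1c_2\cdots c_e$ is the $e$-digit base-$q$ word of $\bar r$. We have
$$\bar rL_h/L=\alpha(q^h-1)=\alpha q^h-\alpha .$$
Write $N_h=\lfloor\alpha q^h\rfloor=(c_1\cdots c_h)_q$ and $\alpha^{(h)}=\{\alpha q^h\}$ for the $h$-th rotation. Then
$$n_h=N_h-\epsilon_h,\qquad \epsilon_h=[\alpha^{(h)}<\alpha],\qquad \rho_h/L=\alpha^{(h)}-\alpha+\epsilon_h .$$
Consequently $h$ is bad exactly when $c_h\not\equiv\epsilon_h\pmod q$. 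The same computation turns $\delta_h$ of \eqref{def:delta} into a function of $c_h$, $\epsilon_{h-1}$ and $\epsilon_h$.

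We translate the standing constraints as follows.
\begin{itemize}
\item $\bar r\equiv1\pmod q$ says $c_e=1$.
\item $0<\rho_h\le L-q^h$ says the rotation $\alpha^{(h)}$ is never equal to $\alpha$, and it stays a definite distance from $\alpha$. This rules out equality of rotations, and, when $\epsilon_h=1$, it forces $\alpha-\alpha^{(h)}\ge q^h/L$, which is at least $q^{h}(q-1)/q^e$.
\end{itemize}

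\textbf{Step 2: consequences of $s$ being the first bad index.} For $h<s$ we have $\delta_h\in\{0,q\}$. I expect this to force a rigid shape on the prefix $c_1\cdots c_{s-1}$. When $\epsilon_h=0$, goodness forces $c_h=0$. When $\epsilon_h=1$, it forces $c_h=1$, since $c_h\equiv 1\pmod q$ and $c_h\le q-1$. In both cases the comparison between $\alpha^{(h)}$ and $\alpha$ is decided by the leading digits. From this I aim to show that $\alpha^{(h)}<\alpha$ for every $h<s$, and that the prefix is a word of the form $c_1\cdots c_{s-1}\in\{0,1\}^{s-1}$ satisfying a lexicographic-maximality condition among its rotations. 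At index $s$, badness gives a digit $c_s$ with $2\le c_s\le q-1$, or a failed comparison.

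\textbf{Step 3: contradiction from $s+b\le e$.} Suppose $s+b\le e$. Use the additivity $L_{b+s}=L_b+q^bL_s$, which gives
$$\rho_{b+s}\equiv\rho_b+q^b\rho_s\pmod L,$$
or, when $b+s=e$, a relation with $\rho_e\equiv 0$. Combined with the digit picture, badness of $b$ means the rotation $\alpha^{(b)}$ begins with a block whose first $s$ digits reproduce the forbidden pattern found at $s$ in Step 2. Since $b+s\le e$, this block lies inside one period.

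We then compare $\alpha^{(b)}$ with $\alpha$ digit by digit over the first $s$ places. Either this comparison contradicts the prefix maximality from Step 2, or it produces a bad index in $(b,e)$, contradicting the maximality of $b$. The boundary case $b+s=e$ is handled by $c_e=1$ together with $\rho_{e-s}\le L-q^{e-s}$.

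I expect the main obstacle to be Step 2: extracting the exact structure of the prefix of good indices, including the carries hidden in $\delta_h$, in a form strong enough for the rotation comparison in Step 3. If the digit bookkeeping becomes unwieldy, my fallback is a direct application of Proposition~\ref{prop:lifted-binomial} with $k=q^{b}L_{s}$ and Lemma~\ref{lem:kummer_special}, imitating the proof of Proposition~\ref{prop:exclusion}.
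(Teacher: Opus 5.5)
Your Step 1 dictionary is correct: $n_h=N_h-\epsilon_h$, so $h$ is good iff $c_h=\epsilon_h$, and $\delta_h=c_h-\epsilon_h+q\epsilon_{h-1}$. The ``in particular'' part is also fine. The proof of $s+b>e$, however, is missing, and the plan you sketch for it does not work.

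\textbf{Step 2 aims at a false statement.} Take $q=3$, $e=3$, $\bar r=4$, whose word is $011$. Then $L=13$ and $\rho_1=4\le 10$. Also $\bar rL_2=16=1\cdot 13+3$ with $0<3\le 4$. So Hypothesis~\ref{hyp} holds, $\mathcal B=\{2\}$ and $s=2$, yet $\alpha^{(1)}=3\alpha>\alpha$. In general $\alpha^{(1)}=q\alpha>\alpha$ whenever $c_1=0$, and $1<s$ always. So the good prefix need not consist of ones, and the unspecified ``lexicographic maximality'' cannot be obtained this way.

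\textbf{Step 3 has no derivation.} Its key claim is that badness of $b$ forces the first $s$ digits of $\alpha^{(b)}$ to reproduce the pattern at $s$. But badness of $b$ only says $c_b\ne\epsilon_b$: it concerns the single digit $c_b$ and the sign of $\alpha^{(b)}-\alpha$. The congruence $\rho_{b+s}\equiv\rho_b+q^b\rho_s\pmod L$ discards exactly the quotient information modulo $q$ that defines goodness. To recover it you would have to track the carry $\lfloor(\rho_b+q^b\rho_s)/L\rfloor$ in $n_{b+s}=n_b+q^bn_s+\lfloor(\rho_b+q^b\rho_s)/L\rfloor$, and you do not.

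\textbf{The fallback is not available.} The statement assumes only the arithmetic Hypothesis~\ref{hyp}, not that $F_{r,a}$ permutes, so Proposition~\ref{prop:lifted-binomial} cannot be invoked. You also give no analysis of the admissible indices.

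\textbf{What is actually needed.} The mechanism is not located near $b$ at all. If $s+b\le e$, then $1,\dots,s-1$ and $e-s+1,\dots,e-1$ are all good, and $b$ plays no further role.
\begin{itemize}
\item The paper runs the recursion \eqref{eq:rho_h} backwards, simultaneously from $h=e-1$ (where $q\rho_{e-1}=L-\bar r$) and from $h=s$.
\item The differences $u_i=\rho_{e-i}-\rho_{s-i}$ satisfy $qu_1=(1-\delta_s)L-\rho_s<0$ and $u_{i+1}=u_i/q+\frac{\delta_{e-i}-\delta_{s-i}}{q}L$, where the fraction lies in $\{-1,0,1\}$.
\item To keep the sign from flipping one needs the companion remainders $\rho_h^+=\rho_h+q^h-1$ of $(\bar r+q-1)L_h$. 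This is where the bound $\rho_h\le L-q^h$ enters. They give $qu_1^+=(q-\delta_s)L-\rho_s^+>0$.
\item The sandwich $u_i<0<u_i^+$, together with $|u_i|,|u_i^+|<L$, forces $\delta_{e-i}=\delta_{s-i}$ at each step. It ends with $u_s<0$, while $u_s=\rho_{e-s}-\rho_0=\rho_{e-s}>0$.
\end{itemize}
In your digit language, this compares the rotations $\alpha^{(e-i)}$ and $\alpha^{(s-i)}$, that is, the tail $c_{e-i+1}\cdots c_e$ against the block $c_{s-i+1}\cdots c_s$. It then repeats the same comparison with $\bar r+q-1$ in place of $\bar r$. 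Nothing in your plan supplies this two-sided control, and your Step 3 looks for the contradiction at the wrong end of the word.
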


\begin{proof}
First note that $s\geq2$ since $n_1=0$. Assume for contradiction that $s+b\le e$. Then $1,\ldots,s-1$ and $e-s+1,\ldots,e-1$ are all good because $b\leq e-s$. We next consider the arithmetic properties of the good parameters at the two ends.

Since $(q-1)L_h=q^h-1$, the relation
$\bar rL_h=n_hL+\rho_h$ gives $(\bar r+q-1)L_h=n_hL+\rho_h+q^h-1$.
Now denote $\rho_h^+=\rho_h+q^h-1$ for $0\le h<e$, and so $(\bar r+q-1)L_h=n_hL+\rho_h^+$.
By Hypothesis~\ref{hyp}, $\rho_0^+=0$, while
$0<\rho_h^+<L$ for $1\le h<e$. Thus the Euclidean divisions of $\bar rL_h$ and $(\bar r+q-1)L_h$ by $L$ have the same quotient $n_h$, with remainders $\rho_h$ and $\rho_h^+$, respectively.

 By $\delta_h=n_h-qn_{h-1}$ and $L_h=qL_{h-1}+1$, we derive the recurrence relations for $\rho_h$ and $\rho_h^+$:
\begin{equation}\label{eq:rho_h}
\begin{aligned}
\rho_h=\bar{r}L_h-n_hL&=q\bar{r}L_{h-1}+\bar{r}-n_hL\\
 &=q\left(n_{h-1}L+\rho_{h-1}\right)+\bar{r}-n_hL\\
 &=q\rho_{h-1}+\bar r-\delta_hL.
\end{aligned}
\end{equation}
Similarly, together with $(\bar r+q-1)L_{h-1}=n_{h-1}L+\rho_{h-1}^+$, we obtain
\begin{align}\label{eq:rho_h+}
\rho_h^+=q\rho_{h-1}^++\bar r+q-1-\delta_hL.
\end{align}

We next consider the case $h=e-1$.
Since $qL_{e-1}=L-1$, we have $\bar rL_{e-1}=(\bar r-1)L/q+(L-\bar r)/q$, where the second term is exactly $\res{\bar{r}L_{e-1}}{L}$, as $L\equiv\bar r\equiv1\pmod q$ and $0<L-\bar r<L$.
Comparing with $\bar rL_{e-1}=n_{e-1}L+\rho_{e-1}$ gives $qn_{e-1}=(\bar r-1)$ and $q\rho_{e-1}=L-\bar r$.
Since $\rho_{e-1}^+=\rho_{e-1}+q^{e-1}-1$, we also obtain
$q\rho_{e-1}^+=qL-\bar r-(q-1)$.

For $1\le i\le s=\min\mathcal{B}$, set $u_i=\rho_{e-i}-\rho_{s-i}$ and $u_i^+=\rho_{e-i}^+-\rho_{s-i}^+$.
Then \(-L<u_i,u_i^+<L\). Using the relations above together with the \eqref{eq:rho_h} and \eqref{eq:rho_h+}, we obtain
$qu_1=(1-\delta_s)L-\rho_s$ and $qu_1^+=(q-\delta_s)L-\rho_s^+$.
Since $s$ is bad, Lemma~\ref{lem:delta} shows
$1\le\delta_s\le q-1$, and hence $u_1<0<u_1^+$.

Now fix $1\le i\le s-1$. Then both $s-i$ and $e-i$ are good,  so Lemma~\ref{lem:delta} yields
$\left(\delta_{e-i}-\delta_{s-i}\right)/q\in\{-1,0,1\}.$
Applying  \eqref{eq:rho_h} and \eqref{eq:rho_h+} at \(h=e-i\) and
\(h=s-i\), and then subtracting, gives
$$u_{i+1}=\frac{u_i}{q}+\frac{\delta_{e-i}-\delta_{s-i}}{q}L,
\qquad \text{and}\qquad u_{i+1}^+=\frac{u_i^+}{q}+\frac{\delta_{e-i}-\delta_{s-i}}{q}L.$$
If \(u_i<0<u_i^+\), then
\((\delta_{e-i}-\delta_{s-i})/q=1\) would imply
\(u_{i+1}^+>L\), while
\((\delta_{e-i}-\delta_{s-i})/q=-1\) would imply
\(u_{i+1}<-L\). Hence
\(\delta_{e-i}=\delta_{s-i}\) for \(1\le i\le s-1\), and therefore
\(u_{i+1}<0<u_{i+1}^+\).
Starting from \(u_1<0<u_1^+\), induction yields \(u_s<0\).
However, $u_s=\rho_{e-s}-\rho_0=\rho_{e-s}>0$,
a contradiction. Hence \(s+b>e\). Consequently,
\(e-b<s=\min\mathcal B\), so \(e-b\) is good.
\end{proof}

\begin{proposition}\label{prop:maxB2}
Assume Hypothesis~\ref{hyp}, $\mathcal B\neq\varnothing$, and let $s=\min\mathcal B$, $b=\max\mathcal B$. Then $\nu_b=\delta_b$.
Therefore $(\nu_b-1)L_{e-b}\le Q_b<\nu_bL_{e-b}$.
\end{proposition}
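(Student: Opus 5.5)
The plan is to reduce the equality $\nu_b=\delta_b$ to a congruence modulo $q-1$ and then prove that congruence using the goodness of every index $h>b$.

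\textbf{Reduction to a congruence.} By Lemma~\ref{lem:delta}, $1\le\delta_b\le q-1$, and by definition $\nu_b\in\{1,\dots,q-1\}$. These are $q-1$ pairwise incongruent residues modulo $q-1$, so it suffices to prove $\nu_b\equiv\delta_b\pmod{q-1}$. The final assertion $(\nu_b-1)L_{e-b}\le Q_b<\nu_bL_{e-b}$ then follows at once from the bound $(\delta_b-1)L_{e-b}\le Q_b<\delta_bL_{e-b}$ of Lemma~\ref{lem:bad-parameter}.

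Now reduce everything modulo $q-1$, using $q\equiv1$. By Lemma~\ref{lem:bad-parameter}, $\nu_b\equiv t_b=\ell_b-n_{e-b}q^{b-1}\equiv \ell_b-n_{e-b}$. Also $\ell_b=(\bar r-1)/q-n_{b-1}$, and the proof of Proposition~\ref{prop:maxB1} shows $qn_{e-1}=\bar r-1$, so $\ell_b=n_{e-1}-n_{b-1}$. On the other hand, $\delta_b=n_b-qn_{b-1}\equiv n_b-n_{b-1}$. Hence the claim becomes
\[
n_{e-1}\equiv n_b+n_{e-b}\pmod{q-1}.
\]
Unrolling $n_h=qn_{h-1}+\delta_h$ gives $n_h\equiv\sum_{i\le h}\delta_i$. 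So we must show
\[
\sum_{i=b+1}^{e-1}\delta_i\equiv\sum_{j=1}^{e-b}\delta_j\pmod{q-1}.
\]
By maximality of $b$, every $i>b$ is good, so $\delta_i\in\{0,q\}$ and each such $\delta_i$ is $\equiv0$ or $1$. By Proposition~\ref{prop:maxB1}, $e-b$ is good as well.

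\textbf{Cyclic comparison.} To compare the two sums, I would reuse the device from Proposition~\ref{prop:maxB1}. From $L_{b+j}=q^jL_b+L_j$ and the recurrence \eqref{eq:rho_h}, set $w_j=\rho_{b+j}-\rho_j$ for $0\le j\le e-1-b$. Then $w_0=\rho_b>0$, and
\[
w_j=qw_{j-1}-(\delta_{b+j}-\delta_j)L,\qquad -L<w_j<L.
\]
Similarly define $w_j^+$ from the $\rho^+$ recurrence \eqref{eq:rho_h+}. These bounds force $\delta_{b+j}-\delta_j$ to take a very restricted set of values at each step. Summing the recurrence then expresses the difference of the two $\delta$-sums through the boundary terms $w_0=\rho_b$ and $w_{e-1-b}=\rho_{e-1}-\rho_{e-1-b}$. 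Here $q\rho_{e-1}=L-\bar r$ is known explicitly. The leftover term $\delta_{e-b}$ should be handled by one more step of the recurrence, using $\rho_{e-b}$ and the identity
\[
\rho_b+q^b\rho_{e-b}=(qt_b-\delta_b+1)L,
\]
which comes from combining $qJ_b=\rho_b+(\delta_b-1)L$ with $q^{b-1}\rho_{e-b}+J_b=t_bL$, both obtained in the proof of Lemma~\ref{lem:bad-parameter}.

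\textbf{Main obstacle.} That last identity alone is merely a restatement of the goal: it shows $\nu_b=\delta_b$ is equivalent to $(\rho_b+q^b\rho_{e-b})/L\equiv1\pmod{q-1}$. So the real input must come from the goodness of the indices $b+1,\dots,e-1$ and of $e-b$. The hard step is controlling the terms $\delta_{b+j}-\delta_j$ for small $j$, since some $j\le e-b$ may themselves be bad, so $\delta_j$ need not lie in $\{0,q\}$.

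My first attempt would be a sign-tracking induction as in Proposition~\ref{prop:maxB1}, run on the pair $(w_j,w_j^+)$. The aim is to show that once the comparison starts, each $\delta_{b+j}-\delta_j$ is a multiple of $q-1$ plus a correction, and that these corrections telescope to the boundary value $\rho_b/L$ (in suitable units). That would give exactly the required congruence.
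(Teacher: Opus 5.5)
Your reduction is correct and is the paper's own: $\nu_b=\delta_b$ follows from $n_{e-1}\equiv n_b+n_{e-b}\pmod{q-1}$, hence from $\sum_{h=b+1}^{e-1}\delta_h\equiv\sum_{h=1}^{e-b}\delta_h$. The comparison sequence $w_j=\rho_{b+j}-\rho_j$ is also the right object. The gap is that you never prove this sum identity, and the obstacle you name does not exist.

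Proposition~\ref{prop:maxB1} gives more than ``$e-b$ is good''. It gives $s+b>e$, i.e.\ $e-b<s=\min\mathcal B$. So \emph{every} $j\in\{1,\dots,e-b\}$ is good, as is every $j\in\{b+1,\dots,e-1\}$. Hence $\epsilon_j:=(\delta_{b+j}-\delta_j)/q\in\{-1,0,1\}$ for all $1\le j\le e-b-1$, and no $(w_j,w_j^+)$ pair or ``corrections'' are needed. Your two fallback ideas do not work:
\begin{itemize}
\item Summing the recurrence does not isolate the endpoints. Because of the factor $q$, $\sum_j(w_j-qw_{j-1})$ still involves every intermediate $w_j$.
\item The identity $\rho_b+q^b\rho_{e-b}=(qt_b-\delta_b+1)L$ is, as you note yourself, equivalent to the goal.
\end{itemize}

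What is missing is the sign-counting argument, which proves the exact equality rather than just a congruence.
\begin{enumerate}
\item No $w_j$ vanishes. If $w_j=0$, then $w_{j-1}=\epsilon_jL$, and $|w_{j-1}|<L$ forces $w_{j-1}=0$. Iterating backwards contradicts $w_0=\rho_b>0$.
\item Each $\epsilon_j$ records a sign change. From $w_j=q(w_{j-1}-\epsilon_jL)$ and $|w_j|,|w_{j-1}|<L$: $\epsilon_j=1$ exactly when $w_{j-1}>0>w_j$, $\epsilon_j=-1$ exactly when $w_{j-1}<0<w_j$, and $\epsilon_j=0$ exactly when the sign is unchanged. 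Since $w_0>0$, $\sum_{j=1}^{e-b-1}\epsilon_j$ equals $1$ if $w_{e-b-1}<0$ and $0$ if $w_{e-b-1}>0$.
\item The final sign is read off from $\delta_{e-b}$. Your idea of ``one more step of the recurrence'' is the right one: $q\rho_{e-1}=L-\bar r$ together with \eqref{eq:rho_h} at $h=e-b$ gives $qw_{e-b-1}=(1-\delta_{e-b})L-\rho_{e-b}$. Since $e-b$ is good, $\delta_{e-b}\in\{0,q\}$, and $0<\rho_{e-b}<L$. So $w_{e-b-1}<0$ exactly when $\delta_{e-b}=q$.
\end{enumerate}
Therefore $\sum_{j=1}^{e-b-1}(\delta_{b+j}-\delta_j)=\delta_{e-b}$, i.e.\ $\sum_{h=b+1}^{e-1}\delta_h=\sum_{h=1}^{e-b}\delta_h$ exactly. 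Your congruence, and with it $\nu_b=\delta_b$, follows.
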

\begin{proof}
By Proposition~\ref{prop:maxB1}, $e-b<s$. Hence
$1,\ldots,e-b$ and $b+1,\ldots,e-1$ are all good. The following compares the corresponding values of $\delta_h$ at these two ends.

Now let $0\le i\le e-b-1$, set $d_i=\rho_{b+i}-\rho_i$.
Then $d_0=\rho_b>0$ and $-L<d_i<L$ for all such $i$. Moreover, for fixed $1\le i\le e-b-1$, both $i$ and $b+i$ are good, so Lemma~\ref{lem:delta} gives
$\left(\delta_{b+i}-\delta_i\right)/q\in\{-1,0,1\}$.
Applying \eqref{eq:rho_h} at \(h=b+i\) and \(h=i\), and
subtracting the resulting identities, we obtain
$$d_i=q\left(d_{i-1}
-\frac{\delta_{b+i}-\delta_i}{q}L\right), \quad \text{for }1\leq i\le e-b-1.$$
We claim that none of the $d_i$ is zero.
Indeed, if $d_i=0$, then
$d_{i-1}=\frac{\delta_{b+i}-\delta_i}{q}L$. Since $-L<d_{i-1}<L$, it follows that $d_{i-1}=0$.
Iterating backwards eventually yields $d_0=0$, contradicting $d_0>0$.

Suppose  that
$(\delta_{b+i}-\delta_i)/q=1$. Then
$d_i=q(d_{i-1}-L)<0$, while $d_i>-L$ implies
$d_{i-1}>0$. Hence $d_{i-1}>0>d_i$.
The other two cases are analogous. Therefore, for $1\le i\le e-b-1$,
\begin{equation*}
\frac{\delta_{b+i}-\delta_i}{q}=
\begin{cases}
1, & d_{i-1}>0>d_i,\\
0, & d_{i-1}d_i>0, \\
-1, & d_{i-1}<0<d_i.
\end{cases}
\end{equation*}
That is, the term is nonzero only when $d_i$ changes sign.
Since $d_0>0$, we have $\sum_{i=1}^{e-b-1}\frac{\delta_{b+i}-\delta_i}{q}$
equals $0$ if $d_{e-b-1}>0$, and $1$ if $d_{e-b-1}<0$. 
Equivalently, \begin{equation}\label{eq:sum1}
\sum_{i=1}^{e-b-1}
(\delta_{b+i}-\delta_i)
=
\begin{cases}
0, & d_{e-b-1}>0,\\
q, & d_{e-b-1}<0.
\end{cases}
\end{equation}

On the other hand, using $q\rho_{e-1}=L-\bar r$ and \eqref{eq:rho_h} at $h=e-b$, we obtain $qd_{e-b-1}=q\rho_{e-1}-q\rho_{e-b-1}=(1-\delta_{e-b})L-\rho_{e-b}.$
Since $e-b$ is good, Lemma~\ref{lem:delta} gives
$\delta_{e-b}\in\{0,q\}$. Hence $d_{e-b-1}>0$ if $\delta_{e-b}=0$, whereas $d_{e-b-1}<0$ if $\delta_{e-b}=q$. Combining this with \eqref{eq:sum1}, we obtain $\sum_{i=1}^{e-b-1}
\bigl(\delta_{b+i}-\delta_i\bigr)
=\delta_{e-b}.$
Equivalently,
\begin{equation}\label{eq:sum2}
\sum_{h=b+1}^{e-1}\delta_h
=\sum_{h=1}^{e-b}\delta_h.\end{equation}
Since
$\delta_h=n_h-qn_{h-1}\equiv n_h-n_{h-1}\pmod{q-1}$,
\eqref{eq:sum2} gives $n_{e-1}-n_b-n_{e-b}\equiv0\pmod{q-1}$.
From the proof of Proposition~\ref{prop:maxB1},
$n_{e-1}=(\bar r-1)/q$, and hence
$$\frac{\bar r-1}{q}-n_b-n_{e-b}\equiv0\pmod{q-1}.$$
By Lemma~\ref{lem:bad-parameter}, at $h=b\in\mathcal{B}$, we have $\nu_b\equiv t_b\pmod{q-1}$ and
$$\frac{\bar r-1}{q}-n_{b-1}
=n_{e-b}q^{b-1}+t_b.$$
Since \(q^{b-1}\equiv1\pmod{q-1}\) and
\(\delta_b\equiv n_b-n_{b-1}\pmod{q-1}\), it follows that
\begin{align*}
\nu_b-\delta_b&\equiv
\frac{\bar r-1}{q}-n_{b-1}-n_{e-b}
-\bigl(n_b-n_{b-1}\bigr)\\
&=\frac{\bar r-1}{q}-n_b-n_{e-b}\\
&\equiv0\pmod{q-1}.
\end{align*}
Hence $\nu_b=\delta_b$, since both $\nu_b$ and $\delta_b$ belong to $\{1,\dots, q-1\}$.

Finally, Lemma~\ref{lem:bad-parameter} at $h=b$  also gives
$(\delta_b-1)L_{e-b}\le Q_b<\delta_bL_{e-b}.$ Substituting $\nu_b=\delta_b$ yields
$(\nu_b-1)L_{e-b}\le Q_b<\nu_bL_{e-b}.$
\end{proof}

\medskip
{\it Proof of Proposition \ref{prop:quotient-divisibility}.}
Suppose that $F_{r,a}$ is a permutation of $\mathbb{F}_{q^e}$ and $\mathcal B\neq\varnothing$, and let
$b=\max\mathcal B$. By Proposition~\ref{prop:maxB1},
$e-b$ is good. Proposition~\ref{prop:maxB2} further gives
$(\nu_b-1)L_{e-b}\le Q_b<\nu_bL_{e-b}$.
Thus $b$ satisfies both conditions of
Proposition~\ref{prop:exclusion}, which contradicts the permutation
property of $F_{r,a}$ . Therefore
$\mathcal B=\varnothing$.
Equivalently, $q\mid n_h$ for every $1\le h<e$.

\section{From quotient divisibility to the exponent congruence}

The preceding section shows $q\mid n_h$ holds for every $1\le h<e$. We now translate this divisibility into the exponent congruence stated in Theorem~\ref{thm:main}.

\begin{proposition}\label{prop:exponent-condition}
Let $q\ge 2$ and $e\ge2$ be integers. Suppose that $1\le\bar r<L$, $\bar r\equiv1\pmod q$. For $1\le k<e$, write $\bar{r}L_k=n_kL+\rho_k$ with $0\le\rho_k<L$. If  $q\mid n_k$ for every $1\le k<e$, then there exists $1\le h<e$ with $\gcd(h,e)=1$ such that $r(q^h-1)\equiv q-1\pmod{q^e-1}$. Equivalently, $rL_h\equiv1\pmod L$.
\end{proposition}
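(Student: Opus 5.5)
The plan is to reduce the statement to finding an index $1\le h<e$ with $\rho_h=1$. The coprimality $\gcd(h,e)=1$ then comes for free. Indeed, if $\bar rL_h\equiv1\pmod L$ and $d=\gcd(h,e)$, then $L_d\mid L_h$ (since $d\mid h$) and $L_d\mid L$ (since $d\mid e$). Hence $1\equiv\bar rL_h\equiv0\pmod{L_d}$, which forces $L_d=1$, i.e.\ $d=1$. Multiplying $rL_h\equiv1\pmod L$ by $q-1$ gives the stated form $r(q^h-1)\equiv q-1\pmod{q^e-1}$.

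To locate such an $h$, I translate the hypothesis into base-$q$ digit information. Put $R_k=(q-1)\rho_k+\bar r=\bar rq^k-n_k(q^e-1)$, so that $R_k\equiv\bar rq^k\pmod{q^e-1}$. The computation \eqref{eq:rho_h} remains valid here (it uses only $L_k=qL_{k-1}+1$), and it gives $R_k=qR_{k-1}-\delta_k(q^e-1)$. Lemma~\ref{lem:delta} only uses $0\le\delta_k\le q$, so $q\mid n_k$ for all $k$ means $\delta_k\in\{0,q\}$. Writing $\delta_k=q\varepsilon_k$ with $\varepsilon_k\in\{0,1\}$, the recursion becomes
\[R_k=q\bigl(R_{k-1}-\varepsilon_k(q^e-1)\bigr).\]
Write $\bar r=\sum_{i=0}^{e-1}c_iq^i$ as an $e$-digit base-$q$ string. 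Multiplication by $q^k$ modulo $q^e-1$ cyclically rotates this string. Since $0\le\rho_k<L$ gives $\bar r\le R_k<q^e-1+\bar r$, the integer $R_k$ is either the rotated residue or that residue plus $q^e-1$. In either case, divisibility of $R_k$ by $q$ says that the digit $c_{e-k}$ is $0$ or $1$, the value $1$ occurring exactly when $R_k$ is the shifted representative. Together with $\bar r\equiv1\pmod q$, this shows that every digit of $\bar r$ lies in $\{0,1\}$ and $c_0=1$. Moreover, the sequence $\varepsilon_k$ records which representative occurs, so it is tied to the digits and to the size comparison between the rotated string and $\bar r$ itself.

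The main step, and the expected obstacle, is to classify the $0/1$ strings compatible with all these comparisons for $k=1,\dots,e-1$. The target is to show that the set $D=\{i:c_i=1\}$ is an arithmetic progression $\{0,h,2h,\dots,(j-1)h\}$ read modulo $e$, with $jh\equiv1\pmod e$. For such $\bar r$ one checks directly that $\bar rL_h\equiv(q^{jh}-1)/(q-1)\equiv L_1=1\pmod L$, so $\rho_h=1$. To prove the classification, I would compare, for each $k$, the rotation of $D$ by $k$ with $D$, using the recursion for $R_k$ to read off $\varepsilon_k$. The condition that every $R_k$ is divisible by $q$ says that no rotation produces an ``illegal'' lowest digit; I expect this to be a balancedness (Sturmian, or Christoffel-word) property of the cyclic $0/1$ word. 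The standard characterization of balanced cyclic words as rotation orbits would then yield the progression structure.

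If the combinatorial route stalls, I would instead argue directly by induction: take $h$ to be the position of the second nonzero digit of $\bar r$, and use the recursion across steps $k=h,2h,\dots$ to show that $\rho_h=1$ must occur.
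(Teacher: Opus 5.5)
\textbf{Verdict: there is a genuine gap.} Your reductions are correct, but the central classification step is only announced, and the fallback choice of $h$ fails.

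The correct parts first. The remark that $\rho_h=1$ already forces $\gcd(h,e)=1$ (via $L_d\mid L_h$ and $L_d\mid L$) is valid and neater than the paper's route. Your analysis of $R_k=(q-1)\rho_k+\bar r$ reproduces the paper's first step: all digits lie in $\{0,1\}$, $c_0=1$, and $c_{e-k}=1$ exactly when the $k$-fold left rotation $\bar r^{(k)}$ is smaller than $\bar r$.

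The proof stops, however, where the substance of the proposition lies. You only \emph{announce} a classification of $0/1$ words with this rotation property (``I expect this to be a balancedness property''); you never derive balancedness, or anything else, from the rotation rule. Even granting it, a description of balanced cyclic words as rotation orbits fixes $D$ only up to cyclic shift. It also says nothing about primitivity. You still need two things:
\begin{itemize}
\item[(a)] the $e$ rotations are pairwise distinct, which gives $\gcd(w,e)=1$ with $w=|D|$;
\item[(b)] $D$ is the specific conjugate $\{th \bmod e: 0\le t<w\}$ with $wh\equiv1\pmod e$. A shifted copy of this set that contains $-h$ gives $\rho_h\neq1$.
\end{itemize}
Your fallback does not repair this. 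The correct $h$ is the inverse of the digit sum $w$ modulo $e$, not the position of the second nonzero digit. For example, take $e=7$ and $\bar r=1+q^3+q^5$. All hypotheses hold (for $q=3$ one gets $n_k=0,0,3,9,30,90$). Yet $\rho_3=q^5+1\neq1$, and the index that works is $h=5$.

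The missing idea, as in the paper, is to sort the rotations.
\begin{itemize}
\item[1.] Distinctness: if two rotations coincide, then $\bar r^{(d)}=\bar r$ for some $d\mid e$ with $d<e$. The rotation rule then gives $c_{e-d}=0$, while periodicity gives $c_{e-d}=c_0=1$, a contradiction.
\item[2.] Label the rotations $0,\dots,e-1$ in increasing order. The rotation rule, together with distinctness, says exactly that the $w$ rotations ending in $1$ carry labels $0,\dots,w-1$, with $\bar r$ at label $w-1$. Trivially, the $e-w$ rotations beginning with $0$ carry labels $0,\dots,e-w-1$.
\item[3.] One left shift preserves the order among rotations with a common leading digit. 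Hence it acts on labels as $j\mapsto j+w \pmod e$.
\item[4.] Distinctness then forces $\gcd(w,e)=1$. Following the label of $\bar r$ shows $c_i=1$ iff $iw \bmod e\in\{0,\dots,w-1\}$, that is, iff $i\equiv th \pmod e$ for some $0\le t<w$.
\end{itemize}
This is in effect the Burrows--Wheeler characterization of Christoffel conjugates that your intuition points to. It must be proved or cited, and it uses the position of $\bar r$ among its own conjugates in an essential way.
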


\begin{proof}
Write $\bar r=\sum_{i=0}^{e-1}r_iq^i$ with $0\le r_i<q$,
and regard $r_{e-1}\cdots r_0$ as a word of length $e$,  including leading zeros. Since $\bar{r}\equiv1\pmod q$, we have $r_0=1$. For $1\le k<e$, denote $\bar{r}^{(k)}$ as the integer from a left cyclic shift of this word by $k$ places.  Since $\bar r<L\le q^e-1$, its digits are not all equal to $q-1$, and cyclic shifts preserve this condition. Hence $0\le \bar r^{(k)}<q^e-1$.

We first claim that for $0\le i<e$, $r_i\in\{0,1\}$.
Fix $1\le k<e$, and write $\bar{r}=Aq^{e-k}+B$, where $A=\left\lfloor\bar{r}/q^{e-k}\right\rfloor$ and $0\le B<q^{e-k}.$ Then $\bar{r}^{(k)}=Bq^k+A$, and hence
$q^k\bar r=Aq^e+Bq^k=A(q^e-1)+\bar{r}^{(k)}$.
Therefore $\bar r(q^k-1)=A(q^e-1)+\bar{r}^{(k)}-\bar r.$
Since $n_k=\lfloor\bar{r}L_k/L\rfloor=\lfloor\bar r(q^k-1)/(q^e-1)\rfloor$ and $|\bar{r}^{(k)}-\bar r|<q^e-1$, we obtain
\begin{equation*}
n_k=\begin{cases}
A-1, & \bar r^{(k)}<\bar r,\\[2mm]
A,    & \bar r^{(k)}\ge\bar r.
\end{cases}
\end{equation*}
Since $q\mid n_k$, it follows that $q\mid A-1$ in the first
case and $q\mid A$ in the second. Moreover,
 using $A=\lfloor\bar{r}/q^{e-k}\rfloor\equiv r_{e-k}\pmod q$ and $0\le r_{e-k}<q$, we conclude that
\begin{equation}\label{eq:r_e-k}
r_{e-k}=\begin{cases}
1, & \bar r^{(k)}<\bar r,\\
0, & \bar r^{(k)}\ge\bar r.
\end{cases}
\end{equation}
Thus $r_i\in\{0,1\}$ for $1\le i<e$; together with $r_0=1$,
every base-$q$ digit of $\bar r$ belongs to $\{0,1\}$.

Next, we analyze the cyclic shifts and determine the positions for which $r_i=1$.

Set $\bar r^{(0)}=\bar r$. We claim that the cyclic shifts are pairwise distinct; namely, $\bar r^{(k_1)}\ne \bar r^{(k_2)}$ for $0\le k_1<k_2<e$.  Suppose that
$\bar r^{(k_1)}=\bar r^{(k_2)}$. Then
$\bar r^{(k_2-k_1)}=\bar r$. Let
$d=\gcd(k_2-k_1,e)$. Since some integer multiple of $k_2-k_1$ is congruent to $d$ modulo $e$, we also have
$\bar r^{(d)}=\bar r$.
Since $1\le d<e$, \eqref{eq:r_e-k} gives
$r_{e-d}=0$. On the other hand, $\bar r^{(d)}=\bar r$
implies that the digit word is fixed by the shift by $d$, and
so $r_{e-d}=r_0=1$, a contradiction. Hence the $e$ cyclic shifts are pairwise distinct.

Let $w=\sum_{i=0}^{e-1}r_i$.
Since $r_0=1$ and the all-ones word represents $L$, which is excluded by $\bar{r}<L$, we have $1\le w<e$.
Arrange the $e$ cyclic shifts in increasing order and label them $0,\ldots,e-1$. As the shifts run through all positions, each digit $r_i$ occurs exactly once as the last digit.  Hence exactly $w$ shifts end in $1$. By~\eqref{eq:r_e-k}, every nontrivial shift ending in $1$ is smaller than $\bar r$, whereas every shift ending in $0$ is larger than $\bar r$. Since $\bar r$ itself ends in $r_0=1$, it is labeled $w-1$. Thus the shifts labeled
$0,\ldots,w-1$ end in 1, while those labeled
$w,\ldots,e-1$ end in 0.

Similarly, exactly $e-w$ shifts begin with $0$ and $w$ shifts
begin with $1$. Specifically, the
former have labels $0,\ldots,e-w-1$, while the latter have labels $e-w,\ldots,e-1$.

We claim that the left cyclic shift preserves the relative order among the shifts beginning with 0, and likewise among those beginning with 1. Indeed, if two shifts with the same first digit $c\in\{0,1\}$ are written as $cq^{e-1}+X<cq^{e-1}+Y$, then $X<Y$. After moving the first digit to the end, they become $qX+c<qY+c$.
Therefore, one left cyclic shift sends a label $j$ to
$j+w$ if $0\le j<e-w$, and to $j+w-e$ if $e-w\le j<e$.
That is, modulo $e$, one left cyclic shift increases the label by $w$. After $t$ left cyclic shifts, the label therefore increases by $tw$ modulo $e$.

Recall that the $e$ cyclic shifts are pairwise distinct, the original word first returns to itself after $e$ left cyclic shifts. Hence the least positive integer $t$ satisfying $tw\equiv0\pmod e$ is $e$. Therefore $\gcd(w,e)=1$.

Choose $1\le h<e$ such that $wh\equiv1\pmod e$. In particular, \(\gcd(h,e)=1\).
Since $\bar r$ is labeled $w-1$, after $k$ left cyclic shifts its label is congruent to $w-1+kw\pmod e$.
Now fix $0\le i<e$. After $e-i$ left cyclic shifts, the digit $r_i$ becomes the last digit. The corresponding label is congruent to $w-1-iw\pmod e$. Since precisely those labels $0,\ldots,w-1$ correspond to shifts ending in 1, we have $r_i=1$ if and only if the residue of $w-1-iw$ modulo~$e$ lies in $\{0,\ldots,w-1\}$. This is equivalent to requiring that the residue of $iw$ modulo
$e$ belong to $\{0,\ldots,w-1\}$, that is, there exists an integer $t$ with $0\le t<w$ such that $iw\equiv t\pmod e$.  Since $wh\equiv1\pmod e$, it follows that $r_i=1$ if and only if $i\equiv th\pmod e$ for some $0\le t<w$.

Consequently, we have
$\bar{r}=\sum_{r_i=1}q^i
\equiv\sum_{t=0}^{w-1}q^{th}
\pmod{q^e-1}.$
Multiplying by $q^h-1$ yields
\begin{align}\label{eq:condition3}
(q^h-1)\bar r\equiv
(q^h-1)\sum_{t=0}^{w-1}q^{th}=q^{wh}-1
\equiv q-1
\pmod{q^e-1},
\end{align}
where the last congruence follows from $wh\equiv1\pmod e$.

Finally, from the definition of $L_h$ and $L$,  \eqref{eq:condition3} is equivalent to $rL_h\equiv1\pmod{L}$.
\end{proof}

\section{Proof of Theorem \ref{thm:main}}

With the preceding results, we are now ready to prove our main theorem.

\noindent{\it Proof of Theorem \ref{thm:main}.}
Suppose first that $F_{r,a}$ permutes $\mathbb F_{q^e}$.
By Proposition~\ref{conditions_for_a,q}, we have $(-a)^L\ne1$ and $\gcd(r,q-1)=1$.

If $e=2$, Lemma~\ref{lem2.3} gives $\bar r=1$. Taking $h=1$, we have $\gcd(h,e)=1$ and $\bar rL_h=1$.
Now assume $e\ge3$. Then Lemma~\ref{lem2.3} and Proposition~\ref{prop:quotient-divisibility} yield that $1\le\bar r<L$, $\bar r\equiv1\pmod q$, and $q\mid n_k$ for every $1\le k<e$.  Proposition~\ref{prop:exponent-condition}
therefore gives some $1\le h<e$ with $\gcd(h,e)=1$ such that $\bar rL_h\equiv1\pmod L$.
Thus in either case there exists $1\le h<e$ with $\gcd(h,e)=1$ and $\bar rL_h\equiv1\pmod L$.
Since $r\equiv\bar r\pmod L$, we also have
$rL_h\equiv1\pmod L$. Using
$q^h-1=(q-1)L_h$ and $q^e-1=(q-1)L$, this is equivalent to
$r(q^h-1)\equiv q-1\pmod{q^e-1}$. Hence all three conditions are necessary.

Conversely, suppose that $(-a)^L\ne1$, $\gcd(r,q-1)=1$,
and that for some $1\le h<e$ with $\gcd(h,e)=1$,
$r(q^h-1)\equiv q-1\pmod{q^e-1}$. Equivalently, $rL_h\equiv1\pmod L$.
In particular, $\gcd(r,L)=1$. Together with
$\gcd(r,q-1)=1$, this gives $\gcd(r,q^e-1)=1$,
so $ g(X)=X^r$ induces a permutation of $\mathbb F_{q^e}$.
Moreover, the exponent congruence gives $rq^h\equiv r+q-1\pmod{q^e-1}$. Hence, $F_{r,a}(X)=\left(X^{q^h}+aX\right)\circ X^r$.

It remains to show that $Y^{q^h}+aY$ permutes
$\mathbb F_{q^e}$. Since $\gcd(q^h-1,q^e-1)=q^{\gcd(h,e)}-1=q-1,$
the equation $Y^{q^h-1}=-a$ has a solution in
$\mathbb F_{q^e}^*$ if and only if $(-a)^L=1$.
Thus $(-a)^L\ne1$ implies that $Y^{q^h}+aY$ has no nonzero
root, and hence it is a permutation of $\mathbb F_{q^e}$.
Therefore both $X^r$ and $X^{q^h}+aX$ are permutation
polynomials, and so is $F_{r,a}$.   \qed

\section*{Acknowledgements}

\paragraph{Related concurrent work.}
After completing the present proof, we became aware of a recent preprint by Fan~\cite{Fan2026}, which independently obtains the same classification and proves the Conjecture~\ref{conj}.
The two approaches are substantially different. Fan's proof uses Hermite power sums, Lucas' theorem, Farey intervals, and a digit-forcing argument, whereas our proof is based on a Galois-ring lifting of the Hermite power sums and \(p\)-adic divisibility of the quotient parameters. In particular, our main arithmetic step is to prove \(q\mid n_h\) for all \(1\le h<e\) by excluding bad quotient parameters.


%
\section*{Declarations}
\begin{conflict of interest} {\rm There is no conflict of interest.}
\end{conflict of interest}

%

\end{document}